\documentclass[11pt,a4paper,reqno]{amsart}
\usepackage[margin=25mm]{geometry}
\usepackage[T1]{fontenc}
\usepackage{lmodern}
\usepackage{amsmath,amssymb,amsthm,mathtools,booktabs,array}
\usepackage{xcolor,fancyhdr,needspace}
\usepackage[colorlinks=true,linkcolor=blue!50!black,citecolor=blue!50!black,urlcolor=blue!50!black]{hyperref}
\hypersetup{pdftitle={Integral Volume Criteria for Semilinear and Quasilinear Liouville Theorems on Complete Manifolds},pdfauthor={Yongheng Qin and Yuhua Sun},pdfsubject={Integral volume criteria for elliptic and parabolic Liouville theorems},pdfkeywords={Liouville theorems, elliptic inequalities, p-Laplacian, nonlinear parabolic equations, Cauchy problem, Fujita exponent, volume growth, test-function method}}
\newtheorem{theorem}{Theorem}[section]
\newtheorem{lemma}[theorem]{Lemma}
\newtheorem{proposition}[theorem]{Proposition}
\newtheorem{corollary}[theorem]{Corollary}
\theoremstyle{remark}\newtheorem{remark}[theorem]{Remark}
\newcommand{\dd}{\,\mathrm d}

\newcommand{\supp}{\operatorname{supp}}
\newcommand{\R}{\mathbb R}
\newcommand{\Q}{\mathcal Q}
\newcommand{\eps}{\varepsilon}
\newcommand{\ind}{\mathbf1}
\numberwithin{equation}{section}
\title[Integral volume Liouville criteria]{Integral Volume Criteria for Semilinear and Quasilinear Liouville Theorems on Complete Manifolds}
\author{Yongheng Qin}
\address{School of Mathematical Sciences and LPMC, Nankai University,
  Tianjin 300071, P. R. China}
\email{1120220027@mail.nankai.edu.cn}

\author{Yuhua Sun}
\address{School of Mathematical Sciences and LPMC, Nankai University,
  Tianjin 300071, P. R. China}
\email{sunyuhua@nankai.edu.cn}

\thanks{\noindent
  Y. Sun was funded by the National Natural Science Foundation of
  China (Grant No. 12371206) and the Fundamental Research Funds for the Central Universities, No. 050-63263078.}

\makeatletter
\@ifundefined{subjclassname@2020}{%
  \@namedef{subjclassname@2020}{\textup{2020} Mathematics Subject Classification}%
}{}
\makeatother
\subjclass[2020]{Primary 35B53; Secondary 35J92, 35K92, 58J35}
\keywords{Liouville theorems, elliptic inequalities, $p$-Laplacian, nonlinear parabolic equations, Cauchy problem, Fujita exponent, volume growth, test-function method}
\date{}
\begin{document}
\begin{abstract}
Let $M$ be a connected complete noncompact Riemannian manifold without boundary, and let $V_o(R)$ denote its ball volume.
We prove Liouville theorems without any curvature assumption.
For $p>1$ and $q>p-1$, divergence of
$\int^\infty R^{pq/(p-1)-1}V_o(R)^{-(q-p+1)/(p-1)}\dd R$
forces every nonnegative Sobolev weak supersolution of $-\Delta_pu\ge u^q$ to vanish.
For $q>\max\{1,p-1\}$, divergence of
$\int^\infty R^{p-1}V_o(R)^{-(q-p+1)}\dd R$
forces every nonnegative global distributional energy supersolution of $u_t-\Delta_pu\ge u^q$ to vanish.
Consequently, the corresponding Cauchy problem has no global solution in this class with a nonzero nonnegative local $L^1$ initial trace.
\end{abstract}
\maketitle

\section{Introduction}
The existence of positive supersolutions of nonlinear differential inequalities reflects both the strength of the reaction and the geometry at infinity.
For a fixed exponent $q>1$, two basic problems are
\begin{equation}\label{eq:models}
 -\Delta u\ge u^q\quad\text{on }M,
 \qquad
 u_t-\Delta u\ge u^q\quad\text{on }M\times(0,\infty),
 \qquad u\ge0.
\end{equation}
Here $\Delta=\operatorname{div}\nabla$ is the Laplace--Beltrami operator, and the measure is Riemannian volume.
The corresponding Liouville questions ask when the zero function is the only supersolution in a specified local solution class.
The parabolic question also provides nonexistence results for the global Cauchy problem.
In Euclidean space, the critical powers are different: the elliptic supersolution threshold is $n/(n-2)$ when $n>2$, whereas the parabolic threshold is $1+2/n$.
On a general manifold, volume growth and potential theory provide geometric replacements for the Euclidean dimension.

\subsection{Elliptic background}
The test-function method derives nonexistence from integral inequalities in the weak formulation.
Its systematic development for nonlinear differential equations and inequalities, including the nonlinear capacity viewpoint, can be found in Mitidieri and Pohozaev~\cite{MP01}.
An advantage of using the first-order weak formulation is that localization can be expressed in terms of the gradient of a distance cutoff, without differentiating the distance twice.
This is relevant on general complete manifolds, where completeness alone supplies no useful global bound for the Laplacian of the distance function.

Grigor'yan and Kondratiev~\cite{GK10} developed capacity and volume tests on manifolds.
For the elliptic inequality in \eqref{eq:models}, their volume criterion includes
\[
 V_o(R)\le C R^{2q/(q-1)}(\log R)^b,
 \qquad b<\frac1{q-1},
\]
in their positive weak solution class.
Grigor'yan and Sun~\cite[Theorem~1.1]{GS14} subsequently reached the sharp single-logarithmic endpoint
\begin{equation}\label{eq:knownElog}
 V_o(R)\le C R^{2q/(q-1)}(\log R)^{1/(q-1)}
 \quad\Longrightarrow\quad u=0
\end{equation}
for nonnegative Sobolev weak supersolutions on a connected geodesically complete manifold.
Their proof already uses a negative-power test and a pure cutoff test, with an auxiliary exponent tied to the cutoff scale and a preliminary global $L^q$ estimate. Their model examples show the sharpness of the polynomial and single-logarithmic powers.
Thus neither the use of two tests nor the single-logarithmic endpoint is new to the present argument.

Zhang~\cite[Theorem~1.2]{Z15} obtained nonexistence under the condition
\begin{equation}\label{eq:Zhang}
 \liminf_{\tau\downarrow0}\tau^{q/(q-1)}
 \int_1^\infty\frac{V_o(R)}{R^{(3q-1)/(q-1)+\tau}}\dd R<\infty.
\end{equation}
This condition contains the volume itself and an auxiliary small parameter.
It is different from the reciprocal-volume divergence condition considered below; no general equivalence between the two conditions is used here.

A complementary approach proceeds through Green functions.
Grigor'yan and Sun~\cite{GS19} studied positive classical supersolutions in exterior domains under volume growth and two-sided Green-function estimates.
Xu~\cite{Xu19} obtained existence on a restricted geometric class under a uniform-in-center convergence condition for a reciprocal-volume integral.
In particular, such existence results must be read together with their standing geometric assumptions.
Grigor'yan, Sun and Verbitsky~\cite[Theorem~1.1 and Corollary~1.2]{GSV20} proved that, under volume doubling and a two-sided Li--Yau type Green-function estimate, a positive classical supersolution of $-\Delta u\ge u^q$ exists precisely when
\begin{equation}\label{eq:historicalE}
 \int^\infty\frac{R^{2q-1}}{V_o(R)^{q-1}}\dd R<\infty.
\end{equation}
They formulated in Conjecture~1 the assertion that divergence of this integral should imply nonexistence on every connected complete manifold, without those additional hypotheses.
The elliptic theorem proved below establishes that nonexistence implication in the nonnegative Sobolev class specified in the statement.
It does not extend the converse existence assertion to arbitrary manifolds.

\subsection{Parabolic background}
For the Euclidean equation $u_t-\Delta u=u^q$, Fujita~\cite{Fujita} established the fundamental distinction between nonexistence below $q_F=1+2/n$ and small-data global existence above it.
The critical power also belongs to the nonexistence range.
Hayakawa~\cite{Hayakawa1973} treated the critical cases in dimensions one and two, and the work of Sugitani~\cite{Sugitani1975} and Kobayashi, Sirao and Tanaka~\cite{KST1977} includes the general-dimensional endpoint.
These results concern the global Cauchy problem, and their initial-data assumptions should not be confused with an elliptic supersolution condition.

On complete manifolds, Mastrolia, Monticelli and Punzo~\cite{MMP2017} developed a test-function theory for weak solutions of quasilinear parabolic inequalities with a potential, using weighted spacetime volume conditions.
For the ordinary Laplacian and reaction potential equal to one, their Corollary~1.6 and Remark~1.7(ii) in the arXiv version imply triviality under
\begin{equation}\label{eq:knownPlog}
 V_o(R)\le C R^{2/(q-1)}(\log R)^b,
 \qquad 0\le b\le\frac1{q-1}.
\end{equation}
The single-logarithmic parabolic endpoint therefore already belongs to the theory without a curvature hypothesis.
Related heat-kernel methods for semilinear inequalities and Fujita exponents on sub-Riemannian manifolds and unimodular Lie groups were developed by Ruzhansky and Yessirkegenov~\cite{RY2022}.
Their heat propagation assumptions form part of the relevant statements and are not consequences of geodesic completeness alone.

The parabolic reciprocal-volume integral
\begin{equation}\label{eq:historicalP}
 \int^\infty\frac{R}{V_o(R)^{q-1}}\dd R
\end{equation}
has also appeared directly in the literature.
Xu's preprint~\cite[Theorem~1.4]{Xu2018} gives divergence and small-data convergence statements under its standing conditions (G) and (H).
Condition (G) controls a radial logarithmic volume-density derivative and is used in the nonexistence proof to estimate the Laplacian of radial cutoffs; condition (H) is a heat-kernel upper bound used on the existence side.
A published chapter of Sun and Xu~\cite{SX2022} also treats the relation between this integral and the Cauchy problem.
The comparison with (G) and (H) made here refers specifically to the inspected 2018 preprint; no identification of the full hypotheses of different versions is assumed.

More recently, von Criegern, Grillo and Monticelli~\cite{CGM2025} extended the spacetime-volume test-function framework to broader nonlinear diffusions with potentials.
In a discrete setting, Gu, Hao, Huang and Sun~\cite{GHHS2026} proved an integral Fujita criterion for weighted graphs with a proper adapted path metric, without doubling or heat-kernel bounds in the nonexistence direction.
Their small-data existence result has an additional heat-kernel hypothesis.
The discrete theorem provides a useful comparison, while the spatial chain rules and Sobolev approximation required in the manifold setting are justified separately below.

\subsection{Nonlinear diffusion and the \texorpdfstring{$p$}{p}-Laplacian}
The nonlinear operator
\[
 \Delta_pu=\operatorname{div}(|\nabla u|^{p-2}\nabla u),\qquad p>1,
\]
is also commonly denoted by $\Delta_m$ with $m=p$.
The diffusion exponent must be distinguished from a polynomial volume degree.
The natural reaction range is $q>p-1$ for the elliptic argument and
$q>\max\{1,p-1\}$ for the parabolic argument.
The corresponding critical volume degrees are
\[
 D_E(p,q)=\frac{pq}{q-p+1},\qquad
 D_P(p,q)=\frac{p}{q-p+1}.
\]
On Euclidean space these produce the elliptic supersolution threshold
$n(p-1)/(n-p)$ when $n>p$, and the Fujita threshold $p-1+p/n$, respectively.
The latter gives a superlinear nonexistence interval only when it exceeds one.

The nonlinear setting and logarithmic volume corrections have established precedents.
Mastrolia, Monticelli and Punzo~\cite{MMP15} treated weighted elliptic $p$-Laplacian inequalities with a potential, and Sun~\cite{Sun16} studied a broader structural class of quasilinear operators.
For unit potential their results include the sufficient endpoint
\[
 V_o(R)\le C R^{pq/(q-p+1)}(\log R)^{(p-1)/(q-p+1)}.
\]
The parabolic work~\cite{MMP2017} and the more recent open-cylinder results~\cite{CGM2025} yield, for the ordinary $p$-Laplacian, sufficient conditions of the form
\[
 V_o(R)\le C R^{p/(q-p+1)}(\log R)^b,
 \qquad
 0\le b\le\min\left\{\frac1{q-1},\frac{p-1}{q-p+1}\right\}.
\]
These quoted bounds do not assert optimality of their logarithmic powers.
Our parabolic argument below uses a volume-dependent time length and proves a different integral condition directly, rather than inferring it from scaling or from those sufficient pointwise bounds.

On the existence side, Grillo, Meglioli and Punzo~\cite{GMP23} constructed global solutions for appropriate small data above the Euclidean Fujita threshold under Sobolev hypotheses; their further results with a Poincar\'e inequality have additional parameter restrictions.
These analytic assumptions are part of the existence theory and are not supplied by completeness or by convergence of a volume integral alone.
In a discrete setting, Gu, Hao, Huang and Sun~\cite{GHHS26p} obtained a reciprocal-volume criterion for the graph $p$-Laplacian.
That work is a direct precedent for the elliptic integral expression; it is not used to justify the continuum Sobolev or temporal tests in this paper.

\subsection{Linear-diffusion results and solution classes}
Throughout the paper, $(M,g)$ is smooth, connected, geodesically complete, noncompact, and without boundary, of dimension $n\ge1$.
In the linear-diffusion Sections~2--6, fix $o\in M$ and $q>1$, and write
\[
 \begin{gathered}
 r(x)=d(o,x),\qquad B_o(R)=\{r<R\},\qquad V_o(R)=\mu(B_o(R)),\\
 \gamma=q-1,\qquad \beta=\gamma^{-1},\qquad \kappa=\frac{2q}{\gamma}.
 \end{gathered}
\]
Completeness makes each closed ball compact, so radial Lipschitz cutoffs have compact support.
Only this fact, positivity and monotonicity of ball volumes, and $|\nabla r|\le1$ almost everywhere are needed from the geometry.
\emph{No curvature assumption, either pointwise or integral, is imposed on the manifold in these Liouville criteria.}

\begin{theorem}[Elliptic integral criterion]\label{thm:E}
Suppose that, for some $o\in M$ and $R_0>0$,
\begin{equation}\label{eq:Ecrit}
 \int_{R_0}^\infty\frac{R^{2q-1}}{V_o(R)^{q-1}}\dd R=\infty.
\end{equation}
Then $u=0$ almost everywhere for every function satisfying
\begin{equation}\label{eq:Eweak}
 \begin{gathered}
 u\in W^{1,2}_{\mathrm{loc}}(M)\cap L^q_{\mathrm{loc}}(M),\qquad u\ge0,\\
 \int_M\langle\nabla u,\nabla\zeta\rangle\dd\mu
 \ge\int_Mu^q\zeta\dd\mu
 \quad\text{for every }0\le\zeta\in C_c^\infty(M)
 \end{gathered}
\end{equation}
\end{theorem}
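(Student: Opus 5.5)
We argue by contradiction: assume $u\not\equiv0$. The plan is the classical two-test argument of Grigor'yan--Sun, with the cutoff scale chosen adapted to the volume function, so that the divergence in \eqref{eq:Ecrit} is used directly rather than through a pointwise volume bound.

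\textbf{Step 1 (negative-power test).} For small $s>0$ we test \eqref{eq:Eweak} with $\zeta=(u+\eps)^{-s}\varphi^{k}$, where $\varphi$ is a radial Lipschitz cutoff with $|\nabla\varphi|\le|\nabla r|\,|\psi'(r)|$. We justify this test by mollification and density, which uses $u\in W^{1,2}_{\mathrm{loc}}$ and compact support of the cutoff; then we let $\eps\downarrow0$. Absorbing the cross term by Young's inequality gives
\[
 \int u^{q-s}\varphi^k+ s\int u^{-s-1}|\nabla u|^2\varphi^k
 \le \frac{C}{s}\int u^{1-s}\varphi^{k-2}|\nabla\varphi|^2 .
\]
A second Young inequality with exponents $\frac{q-s}{1-s}$ and $\frac{q-s}{q-1}$ bounds the right-hand side by $C s^{-a}\int |\nabla\varphi|^{2(q-s)/(q-1)}$, with $a=\frac{q-s}{q-1}$ up to constants.

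\textbf{Step 2 (pure cutoff test).} Testing with $\zeta=\varphi^k$ and using Cauchy--Schwarz/H\"older on the annulus $A=\operatorname{supp}\nabla\varphi$ gives
\[
 \int u^q\varphi^k\le \Big(\int_A u^{-s-1}|\nabla u|^2\varphi^k\Big)^{1/2}\Big(\int_A u^{1+s}|\nabla\varphi|^2\varphi^{k-2}\Big)^{1/2}.
\]
The last factor is controlled by H\"older from $\int_A u^q$, provided $1+s<q$. We then feed Step 1 into the first factor.

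\textbf{Step 3 (choice of cutoff).} This is where the integral condition enters. Instead of $\psi$ linear on $[R,2R]$, we take
\[
 \psi(r)=\int_r^{R_2} h\Big/\int_{R_1}^{R_2}h ,\qquad h(t)=\Big(\frac{t^{2q-1}}{V_o(t)^{q-1}}\Big)^{\theta}
\]
with a suitable exponent $\theta$ (a capacity-optimal profile). We estimate $\int|\psi'(r)|^{m}d\mu$ by the coarea-free layer-cake bound $\int f(r)\,d\mu=\int f\,dV_o$, integrating by parts in $t$. This uses only the monotonicity of $V_o$ and $|\nabla r|\le1$. The outcome should be
\[
 \int|\nabla\varphi|^{2q/(q-1)}\lesssim \Big(\int_{R_1}^{R_2}\frac{t^{2q-1}}{V_o^{q-1}}\dd t\Big)^{-1/(q-1)}
\]
(possibly with a dyadic discretization: replace $V_o$ on $[2^j,2^{j+1}]$ by $V_o(2^{j+1})$ and optimize a discrete weighted sum, which is the simpler route). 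Then $s$ is coupled to the scale, for example $s\sim1/\log$ of the partial integral, so that the factor $s^{-a}$ is beaten.

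\textbf{Step 4 (conclusion).} Divergence makes the partial integrals $J(R_1,R_2)\to\infty$, which has two consequences. Step 1 with $s$ fixed first yields $u\in L^{q-s}$ globally with bounded energy $\int u^{-s-1}|\nabla u|^2$. Step 2 then shows $\int u^q\varphi^k\to0$, because the annular energy factor tends to zero as the annulus escapes to infinity by integrability; thus $u=0$.

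The main obstacle is Step 3 combined with the endpoint bookkeeping. The exponent $2(q-s)/(q-1)$ differs from $2q/(q-1)$, so the cutoff integral carries an extra factor of $V_o$ and $t$. We need $s$ varying with the scale, and the cancellation $s^{-a}J^{s/(q-1)}$ remaining bounded, to get exactly the divergence criterion rather than a log-loss. We would try the dyadic discretization first, and choose $s=1/\log J$.
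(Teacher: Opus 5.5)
Your Steps~1 and~2 are the right two tests; they are exactly the paper's, with your $s$ in the role of its fixed exponent $\theta$. The gap is in what you build on them: a preliminary global integrability bound followed by vanishing of annular terms. This does not work under bare divergence of \eqref{eq:Ecrit}.

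\textbf{Step~4 fails as stated} (and contradicts Step~3, which couples $s$ to the scale). For fixed $s$, Step~1 bounds $\int_{B_o(R)}u^{q-s}$ only by $Cs^{-a}$ times the $\frac{2(q-s)}{q-1}$-energy of a cutoff equal to one on $B_o(R)$. This exponent is below $\frac{2q}{q-1}$, so on a rotationally symmetric model with $V_o(R)\approx R^{2q/(q-1)}(\log R)^{1/(q-1)}$ that energy grows like $R^{2s/(q-1)}$ up to logarithms. Hence neither $u\in L^{q-s}(M)$ nor a global bound on $\int u^{-s-1}|\nabla u|^2$ follows, and ``the annular energy tends to zero by integrability'' has no basis.

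\textbf{Coupling $s=1/\log J$ does not repair this.} The exponent mismatch costs a factor of order $|\nabla\varphi|^{-2s/(q-1)}$, i.e.\ a power $R^{2s/(q-1)}$ of the radii in the support. So $s$ must be at most of order $1/\log R_2$, whereas $J(R_1,R_2)$ may grow only like $\log\log R_2$. You then have to beat $s^{-q/(q-1)}\approx(\log R_2)^{q/(q-1)}$. Even granting your Step~3 bound $J^{-1/(q-1)}$, you obtain $(\log R_2)^{q/(q-1)}(\log\log R_2)^{-1/(q-1)}\to\infty$ already at the single-logarithmic endpoint.

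The truly optimal dyadic profile has energy about $\bigl(\sum_j b_j^{1/(q+1)}\bigr)^{-(q+1)/(q-1)}$ with $b_j=2^{2qj}/V_o(2^j)^{q-1}$. This does compensate when $b_j\approx1/j$, which is the Grigor'yan--Sun case. But for $V_o(R)\approx R^{2q/(q-1)}(\log R\,\log\log R)^{1/(q-1)}$, where \eqref{eq:Ecrit} still diverges, the resulting bound for $\int_{B_o(R_1)}u^q$ grows like $(\log\log R_2)^{1/(q-1)}$. A single long-range cutoff feeds the divergent series in only through the lossy power $1/(q+1)$ and pays $s^{-q/(q-1)}$. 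This is a structural gap, not a matter of bookkeeping.

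\textbf{The missing idea} is to keep the exponent fixed and never aim for a global bound. In the H\"older steps, retain only the mass on the transition region. For $\phi_R=\chi(r/R)$ and $T_R=\int_{\{R<r<2R\}}u^q\phi_R^k$, Steps~1--2 with fixed $\theta$ give $I_R\le C\,G(R)^{(q-1)/q}T_R^{1/q}$, with $G(R)=V_o(2R)/R^{2q/(q-1)}$ and $C=C(q)$. No power of $R$ is lost, because the H\"older exponents $2q/(q-1\pm\theta)$ have reciprocals summing to $(q-1)/q$.

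Now take $R_k=4^kR_*$ and $F_k=\int_{B_o(R_k)}u^q>0$. This one inequality gives both
\begin{itemize}
\item $F_k\le K\,G(R_k)$, from $T_R\le I_R$; and
\item $F_{k+1}-F_k\ge c\,G(R_k)^{-(q-1)}F_k^{q}$, from $F_k\le I_{R_k}$ and $T_{R_k}\le F_{k+1}-F_k$.
\end{itemize}
The relative increment $c(F_k/G(R_k))^{q-1}$ is then bounded, so $F_k^{-(q-1)}-F_{k+1}^{-(q-1)}\ge c'\,G(R_k)^{-(q-1)}$. Telescoping yields $\sum_kR_k^{2q}/V_o(2R_k)^{q-1}\le C'F_0^{-(q-1)}<\infty$. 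By monotonicity of $V_o$ alone (Lemma~\ref{lem:int}), this contradicts \eqref{eq:Ecrit}.

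Each dyadic scale contributes additively to a telescoping sum, so the divergence is used without loss. No scale-dependent exponent, optimized profile, or global $L^q$ estimate is needed.
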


\begin{theorem}[Parabolic integral criterion]\label{thm:P}
Suppose that, for some $o\in M$ and $R_0>0$,
\begin{equation}\label{eq:Pcrit}
 \int_{R_0}^\infty\frac{R}{V_o(R)^{q-1}}\dd R=\infty.
\end{equation}
Then every nonnegative $u\in C^{2,1}(M\times(0,\infty))$ satisfying
\begin{equation}\label{eq:Psup}
 u_t-\Delta u\ge u^q\quad\text{on }M\times(0,\infty)
\end{equation}
vanishes identically.
In particular, if $0\le u_0\in C(M)$ and $u_0\not\equiv0$, the Cauchy problem
\begin{equation}\label{eq:Cauchy}
 u_t-\Delta u=u^q,\qquad u(\cdot,0)=u_0
\end{equation}
has no nonnegative global solution in
$C(M\times[0,\infty))\cap C^{2,1}(M\times(0,\infty))$.
\end{theorem}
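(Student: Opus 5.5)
The plan is a Mitidieri--Pohozaev test-function argument. It uses only the first-order weak form, so only $|\nabla r|\le1$ is needed, and it pairs a space-time cutoff with a negative-power test in the spirit of \cite{GS14}. For a nonnegative $C^{2,1}$ supersolution $u$ of \eqref{eq:Psup}, fix $0<t_0<T$ and a Lipschitz cutoff $\varphi(x,t)=\psi(x)\eta(t)$. Here $\psi$ is radial with compact support, which completeness guarantees, and $\eta$ vanishes near $t_0$ and for $t\ge T$.

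\textbf{Step 1 (negative-power test).} Test \eqref{eq:Psup} against $u_\eps^{-\alpha}\varphi^s$, where $u_\eps=u+\eps$, $0<\alpha<1$ is small and $s$ is large, and then let $\eps\to0$. Integrating by parts in space and time, I expect
\[
\iint u^{q-\alpha}\varphi^s+\alpha\iint |\nabla u|^2u^{-\alpha-1}\varphi^s
\le C\iint u^{1-\alpha}\varphi^{s-2}\bigl(|\nabla\varphi|^2+\varphi|\eta'|\psi^s\bigr).
\]
No second derivative of $r$ appears. I then apply Young's inequality with exponents $(q-\alpha)/(1-\alpha)$. Writing $\Omega=\supp\varphi$, this bounds $\iint_{\Omega} u^{q-\alpha}$ by $\iint_{\Omega}\bigl(|\nabla\psi|^{2(q-\alpha)/(q-1)}+|\eta'|^{(q-\alpha)/(q-1)}\bigr)$.

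\textbf{Step 2 (pure cutoff test).} Test with $\varphi^s$ itself. The term $\iint u\,|\eta'|$ is handled by H\"older against $u^q$. The term $\iint|\nabla u||\nabla\varphi|\varphi^{s-1}$ is handled by Cauchy--Schwarz, splitting it as $|\nabla u|u^{-(1+\alpha)/2}$ times $u^{(1+\alpha)/2}|\nabla\varphi|$. The first factor is controlled by Step 1 and the second by H\"older against $u^{q-\alpha}$, with the integrals localized to $\supp\nabla\varphi\cup\supp\eta'$. With the standard choice $\psi=\psi_0(r/R)$ and $T=R^2$, this reproduces the pointwise Fujita bound. It also shows $\iint u^q<\infty$ whenever the volume-weighted sums below are bounded, and then the localized right-hand sides tend to $0$.

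\textbf{Step 3 (integral criterion, the main obstacle).} Converting divergence of \eqref{eq:Pcrit} into a contradiction is where the real work lies, since pointwise volume bounds are not assumed. Discretizing, divergence is equivalent to $\sum_k R_k^2V_o(R_k)^{-(q-1)}=\infty$ along $R_k=2^k$.

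I will replace the linear radial profile by the capacity-optimal one
\[
\psi(x)=\frac{\int_{r(x)}^{R}w(\rho)\dd\rho}{\int_{R_0}^{R}w(\rho)\dd\rho}\quad\text{for }r(x)\ge R_0,\qquad \psi=1 \text{ on } B_o(R_0),
\]
with weight $w(\rho)=\rho\,V_o(\rho)^{-(q-1)}$, truncated at $0$ beyond $R$. I will couple it with a volume-dependent time length: on each annulus $B_o(R_{k+1})\setminus B_o(R_k)$ the time scale is taken as $T_k\sim V_o(R_k)^{q-1}$, or equivalently the space-time cutoff is built from the single variable $\int_{r}w+t/T(r)$.

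The exponent $\alpha$ is tied to the cutoff scale, $\alpha\sim(\int_{R_0}^R w)^{-1}$, as in the logarithmic endpoint argument of \cite{GS14}. The aim is that each annulus contributes to the Young-inequality right-hand side an amount comparable to its share of $\int_{R_0}^R w$. The total then scales like $(\int_{R_0}^R w)^{-\theta}$ for some $\theta>0$, which tends to $0$ as $R\to\infty$ by \eqref{eq:Pcrit}. Monotonicity of $V_o$ is what lets one compare $\mu$ on the annulus with $V_o(R_{k+1})$ and sum. Controlling the mismatch between neighbouring annuli, and the time-derivative term when $T$ varies in space, is the step I expect to be delicate. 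I would try first to handle it with the one-variable cutoff above, choosing the Young exponents so that the time and space errors balance exactly at $w$.

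\textbf{Step 4 (conclusion and the Cauchy problem).} Step 3 yields $\iint_{M\times(t_0,\infty)}u^q=0$ for every $t_0>0$, so $u\equiv0$. For the Cauchy statement, a global solution $u\in C(M\times[0,\infty))\cap C^{2,1}(M\times(0,\infty))$ with $0\le u_0\not\equiv0$ is a nonnegative supersolution, so it vanishes on $M\times(0,\infty)$. Continuity up to $t=0$ then forces $u_0\equiv0$, a contradiction.
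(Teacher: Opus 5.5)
Your Steps 1, 2 and 4 are fine. Step 3, however, is the entire content of the theorem beyond the classical pointwise Fujita bound, and you leave it as a programme whose central estimate you yourself flag as unresolved. So the proposal does not prove the statement, and the programme is unlikely to close as designed, for three reasons.

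\emph{No global $L^q$ bound.} Tying $\alpha$ to the cutoff scale, as in \cite{GS14}, is used together with a preliminary global $L^q$ bound that comes from pointwise volume control. Under mere divergence of $\sum_kR_k^2V_o(R_k)^{-(q-1)}$ the terms may tend to zero (like $1/k$ at the logarithmic endpoint). Then your Step 2 bound $\iint_{\Q^a_R}u^q\lesssim R^{2-\kappa}V_o(2R)$, with $\kappa=2q/(q-1)$, is unbounded and gives no global $L^q$ estimate. An irregular $V_o$ also defeats annulus-by-annulus bookkeeping.

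\emph{The temporal term is not localized.} During the temporal ramp, the time-cutoff derivative lives on the whole inner ball, so a capacity-optimal profile in $r$ does not control it. Letting $T$ depend on $r$ adds a spatial gradient $t\,|\partial_r(1/T(r))|$ that grows with $t$.

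\emph{Wrong time scale.} In the two-test estimate the error factor is comparable to $(R^{-2}+T^{-1})(TV_o(2R))^{(q-1)/q}$. The resulting mass bound $(R^{-2}+T^{-1})^{q/(q-1)}TV_o(2R)$ is minimized at $T\sim R^2$. The length $V_o(R)^{q-1}$ agrees with this only at exactly critical growth, and otherwise inflates the spacetime volume.

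The missing idea is not to absorb the transition mass. Take a fixed $\theta$ in place of $\alpha$. Discard the reaction term in the first test, and apply H\"older (not Young) to every cutoff term against the $u^q$-mass on the transition set. The two tests then give $I_R\le C\,G(R)^{(q-1)/q}T_R^{1/q}$. Here $I_R=\iint u^q\phi_R^s$, $T_R$ is the $u^q$-mass where $\nabla\phi_R$ or $\partial_t\phi_R$ is nonzero, and $G(R)=2R^{2-\kappa}V_o(2R)$. Since $T_R\le I_R$, this single inequality yields both $I_R\le KG(R)$ and $T_R\ge cG(R)^{-(q-1)}I_R^q$.

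Use the one-sided cutoff $\phi_R=\chi(r/R)\chi((t-a)/R^2)$, nonincreasing in $t$ from a fixed $a>0$. Both initial boundary terms then have a favorable sign and can be discarded. Your $\eta$, vanishing near $t_0$, instead creates a bottom transition layer shared by all scales, which ruins what follows.

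On the nested cylinders $\Q^a_{R_k}$, $R_k=4^kR_*$, the transition regions are disjoint. Hence $F_k=\iint_{\Q^a_{R_k}}u^q$ satisfies $F_k\le Kg_k$ and $F_{k+1}-F_k\ge cg_k^{-(q-1)}F_k^q$, with $g_k=G(R_k)$. The upper bound keeps $z_k=c(F_k/g_k)^{q-1}$ bounded, so $F_k^{-(q-1)}-F_{k+1}^{-(q-1)}\ge c'g_k^{-(q-1)}$. Telescoping gives $\sum_kR_k^2V_o(2R_k)^{-(q-1)}\le CF_0^{-(q-1)}<\infty$ whenever $u\not\equiv0$. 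Monotonicity of $V_o$ alone converts this series into the hypothesis integral, a contradiction. No scale-dependent exponent, global $L^q$ bound, special profile, doubling, or volume-dependent time length is needed.
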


Here $C^{2,1}$ means that the spatial derivatives through order two and the first temporal derivative exist and are continuous.
The parabolic proof begins at any positive time, so the first conclusion needs neither an initial trace nor global boundedness.
The elliptic and parabolic solution classes are deliberately stated separately.
This initial statement is classical; Section~\ref{sec:mP} supplies the temporal regularization and proves the corresponding criterion in the specified distributional energy class, also when $p=2$.
Its conclusion is global nonexistence; a norm-specific finite-time blow-up statement would additionally require a local well-posedness and continuation framework.

\subsection{The nonlinear criteria in one view}
The later sections give complete proofs for nonlinear diffusion.
Write $p=m>1$ and $d=q-p+1$.
For elliptic nonnegative weak supersolutions in
$W^{1,p}_{\mathrm{loc}}(M)\cap L^q_{\mathrm{loc}}(M)$, Theorem~\ref{thm:pE} proves
\begin{equation}\label{eq:introNonlinearE}
 q>p-1,\qquad
 \int^\infty
 \frac{R^{pq/(p-1)-1}}{V_o(R)^{d/(p-1)}}\dd R=\infty
 \quad\Longrightarrow\quad u=0.
\end{equation}
For parabolic nonnegative distributional supersolutions in the local spatial-energy class specified in Section~\ref{sec:mP}, Theorem~\ref{thm:mP} proves
\begin{equation}\label{eq:introNonlinearP}
 q>\max\{1,p-1\},\qquad
 \int^\infty\frac{R^{p-1}}{V_o(R)^d}\dd R=\infty
 \quad\Longrightarrow\quad u=0.
\end{equation}
In particular, \eqref{eq:introNonlinearP} excludes nonzero global Cauchy solutions in that class with a nonzero nonnegative local $L^1$ initial trace.
The two implications reduce to the linear-diffusion criteria when $p=2$.
Both integral criteria hold without any restriction on the sectional or Ricci curvature of $M$; their proofs use only first-order distance cutoffs and require no Laplacian comparison theorem.
No time derivative of the weak solution is postulated: forward Steklov averages justify the nonlinear temporal test.

The elliptic estimate retains a transition mass to the power $(p-1)/q$.
The parabolic estimate has two different mass powers, $(p-1)/q$ and $1/q$.
Their coexistence calls for a two-exponent mass lemma and the intrinsic time length
\[
 T(R)=R^pV_o(2R)^{p-2}.
\]
When $1<p<2$, this time length need not be monotone. A record-scale selection preserves the divergent series while producing nested cylinders.
At the critical volume degrees, the resulting single-logarithmic exponents are
$(p-1)/(q-p+1)$ in the elliptic case and $1/(q-p+1)$ in the parabolic case.
All fixed finite iterated-logarithmic endpoints are included.
The paper also treats $q=p-1$ elliptically through a spectral criterion and gives counterexamples to a formal continuation of the integral formulas beyond their reaction ranges.

\subsection{The common mechanism and further consequences}
For linear diffusion, the central estimate retains the nonlinear mass $T$ on the region where the cutoff changes, rather than replacing it prematurely by the full localized mass $I$.
Two tests with a fixed negative exponent produce
\[
 I\le C G^{(q-1)/q}T^{1/q}.
\]
The same estimate gives both $I\le KG$ and $T\ge cG^{-(q-1)}I^q$.
On geometric scales these become a mass upper bound and a mass increment lower bound.
An elementary finite-difference estimate for a negative power of the cumulative mass then forces a reciprocal-volume series to converge whenever the supersolution is nonzero.
An index-shifted comparison converts that series into the required integral using only volume monotonicity, with no doubling assumption and no preliminary global $L^q$ bound.

For the heat equation with $p=2$, a nonincreasing temporal cutoff produces a favorable initial boundary contribution in each test.
All spatial and temporal derivative terms are localized to one spacetime transition region.
The time scale $R^2$ adds a factor $R^2$ to the volume, accounting for the change from the elliptic numerator $R^{2q-1}$ to the parabolic numerator $R$.
This gives a self-contained first-order proof of the two linear-diffusion implications in the stated classes; the nonlinear arguments are provided in Sections~7 and~8.
The comparison above records the scope of the cited versions and is not an exhaustive claim of publication priority.

Section~2 contains the summation and monotone-volume lemmas.
Section~3 proves the elliptic theorem, including the legality of bounded Sobolev tests.
Section~4 treats the time boundary terms and proves the parabolic theorem.
Section~5 gives center independence and finite iterated-logarithmic endpoints.
Section~6 provides smooth elliptic models and supercritical Euclidean existence examples.
Sections~7 and~8 prove the nonlinear elliptic and parabolic criteria, respectively.

\Needspace{12\baselineskip}
\section{Discrete mass estimates and monotone volume comparison}
\begin{lemma}[Summation of negative powers]\label{lem:seq}
Let $\gamma,K,c>0$, and let positive sequences $F_k,g_k$ satisfy
\begin{equation}\label{eq:seq}
 F_k\le Kg_k,\qquad F_{k+1}-F_k\ge cg_k^{-\gamma}F_k^{1+\gamma}.
\end{equation}
Then
\begin{equation}\label{eq:seqsum}
 \sum_{k=0}^\infty g_k^{-\gamma}
 \le\frac{(1+cK^\gamma)^{\gamma+1}}{\gamma c}F_0^{-\gamma}.
\end{equation}
Neither monotonicity of $g_k$ nor an upper bound on the actual relative increments of $F_k$ is required.
\end{lemma}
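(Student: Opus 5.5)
The plan is to telescope the negative power $F_k^{-\gamma}$, and to show that each decrement $F_k^{-\gamma}-F_{k+1}^{-\gamma}$ is at least a fixed multiple of $g_k^{-\gamma}$. Summing over $k$ then bounds $\sum g_k^{-\gamma}$ by $F_0^{-\gamma}$, because the telescoping sum $\sum_{k=0}^{N}(F_k^{-\gamma}-F_{k+1}^{-\gamma})=F_0^{-\gamma}-F_{N+1}^{-\gamma}$ is at most $F_0^{-\gamma}$.

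First I set $x_k=c\,g_k^{-\gamma}F_k^{\gamma}>0$. Then the second inequality in \eqref{eq:seq} reads $F_{k+1}\ge F_k(1+x_k)$. In particular $F_k$ is increasing, but nothing is assumed about how large the ratio $F_{k+1}/F_k$ can be.

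Next, the first inequality in \eqref{eq:seq} gives $g_k^{-\gamma}F_k^\gamma\le K^\gamma$, hence the uniform bound
\[
x_k\le a:=cK^\gamma .
\]
This is the only place the upper bound $F_k\le Kg_k$ enters. Since $t\mapsto t^{-\gamma}$ is decreasing, $F_{k+1}\ge F_k(1+x_k)$ yields
\[
F_k^{-\gamma}-F_{k+1}^{-\gamma}\ge F_k^{-\gamma}\bigl(1-(1+x_k)^{-\gamma}\bigr).
\]
An arbitrarily large actual increment only helps here, which is why no upper bound on the relative increments is required.

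The main step is a lower bound for $1-(1+x)^{-\gamma}$ that is linear in $x$ on $[0,a]$. I would write
\[
1-(1+x)^{-\gamma}=\gamma\int_0^x(1+s)^{-\gamma-1}\dd s\ge \gamma x(1+a)^{-\gamma-1}\qquad (0\le x\le a).
\]
Substituting $x=x_k$ gives
\[
F_k^{-\gamma}-F_{k+1}^{-\gamma}\ge \gamma(1+a)^{-\gamma-1}F_k^{-\gamma}x_k=\gamma c(1+cK^\gamma)^{-\gamma-1}g_k^{-\gamma}.
\]
The factor $F_k^{-\gamma}$ cancels exactly against the $F_k^\gamma$ inside $x_k$.

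Finally I sum over $0\le k\le N$, use that the telescoping sum is at most $F_0^{-\gamma}$, and let $N\to\infty$. This gives \eqref{eq:seqsum} with precisely the stated constant $(1+cK^\gamma)^{\gamma+1}/(\gamma c)$. No monotonicity of $g_k$ is used anywhere. I do not expect a genuine obstacle; the one point needing care is to use the bound $x_k\le a$ to get a linear lower bound, rather than linearizing $(1+x)^{-\gamma}$ near $0$.
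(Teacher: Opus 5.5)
Your proposal is correct and follows the paper's proof essentially line for line. The paper uses the same substitution $z_k=c(F_k/g_k)^\gamma\le cK^\gamma$, the same integral lower bound for $1-(1+z)^{-\gamma}$, and the same telescoping of $F_k^{-\gamma}$, and it arrives at exactly the stated constant.
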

\begin{proof}
Set $z_k=c(F_k/g_k)^\gamma$. Then $0<z_k\le M_0:=cK^\gamma$, and
$F_{k+1}\ge F_k(1+z_k)$. Since a negative power is decreasing and
\[
 1-(1+z)^{-\gamma}=\gamma\int_0^z(1+y)^{-\gamma-1}\dd y
 \ge\frac{\gamma z}{(1+M_0)^{\gamma+1}}\quad(0\le z\le M_0),
\]
we obtain
\[
 F_k^{-\gamma}-F_{k+1}^{-\gamma}
 \ge\frac{\gamma c}{(1+cK^\gamma)^{\gamma+1}}g_k^{-\gamma}.
\]
Sum from $k=0$ to $N$, use $F_{N+1}^{-\gamma}\ge0$, and let $N\to\infty$.
Here the controlled quantity $z_k$ is a lower bound for the actual relative increment; no upper bound for that increment is used.
\end{proof}

\begin{remark}
The mass upper bound cannot be omitted. Indeed, take $g_k=1$,
$F_{k+1}=F_k+F_k^{1+\gamma}$, and $F_0>0$. Every $F_k$ with finite index is finite, and the increment condition holds, whereas $\sum g_k^{-\gamma}=\infty$.
Conversely, $g_k=F_k$ and $F_{k+1}\ge2F_k$ satisfy the hypotheses with $K=c=1$, even when the actual relative increments are arbitrarily large.
\end{remark}

\begin{lemma}[Integral comparison without volume doubling]\label{lem:int}
Let $V:(0,\infty)\to(0,\infty)$ be nondecreasing, let $A>0$, $\gamma>0$, and $L>1$, and set $x_k=L^kx_0$ and $b_k=x_k^A/V(x_k)^\gamma$.
Then
\begin{equation}\label{eq:intcompare}
 \frac{1-L^{-A}}A b_{k+1}
 \le\int_{x_k}^{x_{k+1}}\frac{R^{A-1}}{V(R)^\gamma}\dd R
 \le\frac{L^A-1}A b_k.
\end{equation}
Consequently, the corresponding tail integral converges if and only if $\sum_k b_k$ converges.
\end{lemma}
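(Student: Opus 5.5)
The plan is to bound the integrand pointwise on $[x_k,x_{k+1}]$ using only the monotonicity of $V$, and then integrate the explicit power $R^{A-1}$.

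For the upper bound, note that $R\ge x_k$ gives $V(R)\ge V(x_k)$, so $V(R)^{-\gamma}\le V(x_k)^{-\gamma}$. Hence
\[
\int_{x_k}^{x_{k+1}}\frac{R^{A-1}}{V(R)^\gamma}\dd R\le V(x_k)^{-\gamma}\,\frac{x_{k+1}^A-x_k^A}{A}=\frac{L^A-1}{A}\,\frac{x_k^A}{V(x_k)^\gamma}=\frac{L^A-1}{A}\,b_k .
\]

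For the lower bound, $R\le x_{k+1}$ gives $V(R)^{-\gamma}\ge V(x_{k+1})^{-\gamma}$. Integrating,
\[
\int_{x_k}^{x_{k+1}}\frac{R^{A-1}}{V(R)^\gamma}\dd R\ge V(x_{k+1})^{-\gamma}\,\frac{x_{k+1}^A-x_k^A}{A}=\frac{1-L^{-A}}{A}\,b_{k+1},
\]
where we used $x_k^A=L^{-A}x_{k+1}^A$. Both constants are positive and finite because $L>1$ and $A>0$.

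For the consequence, sum \eqref{eq:intcompare} over $k\ge k_0$. Since the intervals $[x_k,x_{k+1}]$ tile $[x_{k_0},\infty)$ (as $x_k\to\infty$), this gives
\[
\frac{1-L^{-A}}{A}\sum_{k>k_0}b_k\le\int_{x_{k_0}}^\infty\frac{R^{A-1}}{V(R)^\gamma}\dd R\le\frac{L^A-1}{A}\sum_{k\ge k_0}b_k .
\]
The integrand is nonnegative and locally bounded on $(0,\infty)$, because $V$ is positive and nondecreasing and so $V(R)\ge V(a)>0$ on $[a,\infty)$. Therefore convergence of any tail $\int_{R_0}^\infty$ is equivalent to convergence of $\int_{x_{k_0}}^\infty$, and likewise finitely many terms of $\sum_k b_k$ do not matter. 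The equivalence follows.

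There is no real obstacle. The only point to stress is that the index shift in the lower bound ($b_{k+1}$ rather than $b_k$) is what removes any need for a doubling comparison between $V(x_k)$ and $V(x_{k+1})$.
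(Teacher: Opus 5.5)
Your proposal is correct and follows the paper's proof: on $[x_k,x_{k+1}]$ you bound $V(R)^{-\gamma}$ by its values at the two endpoints using monotonicity, integrate $R^{A-1}$ exactly, and then sum over $k\ge k_0$, discarding finitely many initial terms. The index shift to $b_{k+1}$ in the lower bound is exactly how the paper avoids any doubling comparison.
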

\begin{proof}
On $[x_k,x_{k+1}]$, bound the denominator using
$V(x_k)\le V(R)\le V(x_{k+1})$ and integrate $R^{A-1}$ exactly.
The lower estimate involves $b_{k+1}$, whereas the upper estimate involves $b_k$.
Summation over finite ranges followed by passage to the limit proves the equivalence, since removing one finite initial term does not affect convergence.
\end{proof}

\section{Proof of the elliptic criterion}
\subsection{Admissibility of bounded Sobolev test functions}
\begin{lemma}\label{lem:test}
Suppose that $u\in W^{1,2}_{\rm loc}(M)$ and $f\in L^1_{\rm loc}(M)$, and that
$\int\langle\nabla u,\nabla\zeta\rangle\ge\int f\zeta$ holds for every nonnegative smooth compactly supported test function.
Then the same inequality holds for every nonnegative bounded compactly supported $W^{1,2}$ test function.
\end{lemma}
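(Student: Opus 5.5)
The plan is to approximate a given test function $0\le\zeta\in W^{1,2}(M)$, bounded with compact support, by nonnegative smooth compactly supported functions $\zeta_j$, and then pass to the limit on both sides of the inequality. The approximants should satisfy
\[
 \zeta_j\to\zeta\ \text{in }W^{1,2},\qquad
 \sup_j\|\zeta_j\|_\infty<\infty,\qquad
 \supp\zeta_j\subset K\ \text{for one fixed compact }K,\qquad
 \zeta_j\to\zeta\ \text{a.e.}
\]
The left-hand side will then converge because $\nabla u\in L^2(K)$ and $\nabla\zeta_j\to\nabla\zeta$ in $L^2(K)$. The right-hand side will converge by dominated convergence: the integrand $f\zeta_j$ is dominated by $(\sup_j\|\zeta_j\|_\infty)|f|\ind_K\in L^1$, and it converges almost everywhere. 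Boundedness of $\zeta$ is needed precisely here, since $f$ is only in $L^1_{\rm loc}$.

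To build $\zeta_j$, cover $\supp\zeta$ by finitely many coordinate charts $U_1,\dots,U_N$ with relatively compact closures. Take a smooth partition of unity $\{\psi_i\}$ subordinate to this cover, with $\sum_i\psi_i=1$ on a neighbourhood of $\supp\zeta$. Then each $\psi_i\zeta$ is a nonnegative bounded $W^{1,2}$ function with compact support in $U_i$.

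In local coordinates, mollify $\psi_i\zeta$ with a standard nonnegative mollifier $\rho_\eps$. The result is smooth and nonnegative, supported in a fixed compact subset of $U_i$ for small $\eps$, and bounded by $\|\zeta\|_\infty$. It converges to $\psi_i\zeta$ in the Euclidean $W^{1,2}$ norm, and also almost everywhere along a subsequence. On a compact subset of a chart, the Riemannian metric and volume density are bounded above and below, so Euclidean $W^{1,2}$ convergence is equivalent to Riemannian $W^{1,2}$ convergence there.

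Summing over $i$ gives $\zeta_j$, which is nonnegative, smooth, compactly supported in a fixed compact set, and bounded by $N\|\zeta\|_\infty$. After passing to a diagonal subsequence, $\zeta_j\to\zeta$ both in $W^{1,2}$ and almost everywhere.

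I expect the only delicate point to be the right-hand side: making sure the convergence used there is almost-everywhere convergence with a uniform sup bound, rather than merely $W^{1,2}$ convergence, which does not control $\int f\zeta_j$ when $f\in L^1_{\rm loc}$. The subsequence extraction and the sup bound from nonnegative mollification handle this. Everything else is routine, and no curvature or completeness is needed, because the construction is local.
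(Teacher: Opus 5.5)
Your proposal is correct and follows essentially the same route as the paper: localize with a finite partition of unity in relatively compact coordinate charts, and mollify each piece nonnegatively so that the supports stay in one fixed compact set and the approximants obey a uniform sup bound. You then pass to the limit on the left by Cauchy--Schwarz and on the right by dominated convergence, using a.e.\ convergence along a subsequence with dominating function a constant multiple of $|f|\ind_K$.
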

\begin{proof}
Let $K$ be the compact support of the test function, and choose
$K\Subset\Omega\Subset M$. Cover $K$ by finitely many relatively compact coordinate neighborhoods, and choose nonnegative smooth functions $\eta_i$ supported in these neighborhoods such that $\sum_i\eta_i=1$ in a neighborhood of $K$.
Extend each $\eta_i\zeta$ by zero in coordinates and convolve with a nonnegative standard mollifier. Choose the mollification radii small enough that all supports remain in one compact set $K'\Subset\Omega$.
Pulling back and summing gives nonnegative functions $\zeta_j\in C_c^\infty(\Omega)$ such that
\[
 \zeta_j\to\zeta\text{ in }W^{1,2},\qquad
 \zeta_j\to\zeta\text{ a.e. along a subsequence},\qquad
 0\le\zeta_j\le\Big(\sum_i\|\eta_i\|_\infty\Big)\|\zeta\|_\infty.
\]
On compact coordinate sets, the metric coefficients and volume density have uniform bounds, so coordinate Sobolev convergence implies the stated convergence on $M$.
The left-hand side passes to the limit by Cauchy--Schwarz, and the right-hand side by dominated convergence with a constant multiple of $|f|\ind_{K'}$ as the dominating function.
The approximation constant serves only to justify admissibility and does not enter the subsequent energy estimates.
\end{proof}

\subsection{Fixed parameters and two tests}
Fix
\begin{equation}\label{eq:parameters}
 0<\theta<\min\{1,q-1\},\qquad
 p_+=\frac{2q}{\gamma+\theta},\quad p_-=\frac{2q}{\gamma-\theta},\qquad s>p_-.
\end{equation}
For example, one may take $\theta=\tfrac12\min\{1,\gamma\}$ and $s=4q/(\gamma-\theta)$.
Thus $0<1\pm\theta<q$ and $2<p_+<\kappa<p_-<s$, and all these parameters depend only on $q$.
For a compactly supported Lipschitz function $0\le\phi\le1$, write
\[
 S=\{|\nabla\phi|>0\},\quad I=\int u^q\phi^s\dd\mu,\quad
 T=\int_Su^q\phi^s\dd\mu,\quad u_\eps=u+\eps\ (0<\eps\le1),
\]
\[
 A_\eps=\int u_\eps^{-1-\theta}\phi^s|\nabla u|^2\dd\mu,\qquad
 T_\eps=\int_Su_\eps^q\phi^s\dd\mu.
\]
All these quantities are finite. For fixed $\eps$, the function
$z\mapsto(z+\eps)^{-\theta}$ is bounded on $[0,\infty)$ with bounded derivative and admits a bounded Lipschitz extension.
The chain and product rules therefore make $u_\eps^{-\theta}\phi^s$ an admissible test function in Lemma~\ref{lem:test}.
Moreover, $\nabla u=0$ almost everywhere on the level set $\{u=0\}$, so the zero set causes no additional term.

The first test gives
\[
 \theta A_\eps+\int u^q u_\eps^{-\theta}\phi^s\dd\mu
 \le s\int_Su_\eps^{-\theta}\phi^{s-1}\langle\nabla u,\nabla\phi\rangle\dd\mu.
\]
Apply Young's inequality to the right-hand side, absorb $\theta A_\eps/2$, and discard the nonnegative nonlinear term to obtain
\begin{equation}\label{eq:Eenergy}
 A_\eps\le\frac{s^2}{\theta^2}B_\eps,\qquad
 B_\eps=\int_Su_\eps^{1-\theta}\phi^{s-2}|\nabla\phi|^2\dd\mu.
\end{equation}
The conjugate exponents $q/(1-\theta)$ and $q/(\gamma+\theta)$ satisfy
\[
 \Big(s-2-\frac{s(1-\theta)}q\Big)\frac q{\gamma+\theta}=s-p_+>0.
\]
H\"older's inequality and $0\le\phi\le1$ consequently yield
\begin{equation}\label{eq:EB}
 B_\eps\le T_\eps^{(1-\theta)/q}
 \left(\int|\nabla\phi|^{p_+}\dd\mu\right)^{2/p_+}.
\end{equation}
For the second test, use $\phi^s$ and apply Cauchy--Schwarz to get
\begin{equation}\label{eq:Esecond}
 I\le sA_\eps^{1/2}D_\eps^{1/2},\qquad
 D_\eps=\int_Su_\eps^{1+\theta}\phi^{s-2}|\nabla\phi|^2\dd\mu.
\end{equation}
Similarly, the conjugate exponents $q/(1+\theta)$ and $q/(\gamma-\theta)$, together with the remaining cutoff power $s-p_->0$, give
\[
 D_\eps\le T_\eps^{(1+\theta)/q}
 \left(\int|\nabla\phi|^{p_-}\dd\mu\right)^{2/p_-}.
\]
Combining these estimates gives
\[
 I\le\frac{s^2}{\theta}T_\eps^{1/q}
 \|\nabla\phi\|_{p_+}\|\nabla\phi\|_{p_-}.
\]
Since $(u+\eps)^q\le2^{q-1}(u^q+1)$ is integrably dominated on the compact support, we may let $\eps\downarrow0$ in the nonsingular mass $T_\eps$.
We have proved
\begin{equation}\label{eq:Ekey}
 \boxed{I\le C_E T^{1/q}\|\nabla\phi\|_{p_+}\|\nabla\phi\|_{p_-},
 \qquad C_E=s^2/\theta.}
\end{equation}
No limit of $A_\eps$ is needed. Throughout the argument the right-hand side of the original differential inequality remains $u^q$; no supersolution inequality with right-hand side $(u+\eps)^q$ is assumed.

\subsection{Radial cutoffs and mass increments}
Define
\[
 \chi(z)=\begin{cases}1,&0\le z\le1,\\2-z,&1<z<2,\\0,&z\ge2,\end{cases}
 \qquad \phi_R=\chi(r/R),\qquad W_o(R)=\mu(\{R<r<2R\}).
\]
Then $\supp\phi_R\subset\overline{B_o(2R)}$ and
$|\nabla\phi_R|\le R^{-1}\ind_{\{R<r<2R\}}$ almost everywhere.
The cut locus does not affect this Lipschitz weak-gradient estimate.
Noncompactness and connectedness ensure $0<W_o(R)<\infty$.
Using
\[
 \frac1{p_+}+\frac1{p_-}=\frac\gamma q,\qquad \frac{\kappa\gamma}q=2,
\]
we obtain
\begin{equation}\label{eq:Eradial}
 I_R\le C_E R^{-2}W_o(R)^{\gamma/q}T_R^{1/q}
 \le C_E\left(\frac{V_o(2R)}{R^\kappa}\right)^{\gamma/q}T_R^{1/q}.
\end{equation}
Fix $G(R)=V_o(2R)/R^\kappa$; alternatively, apply the argument separately with
$G(R)=W_o(R)/R^\kappa$.
For either choice, the same estimate with the same $G$ yields
\begin{equation}\label{eq:Edual}
 I_R\le K G(R),\qquad T_R\ge cG(R)^{-\gamma}I_R^q,
 \qquad K=C_E^{q/\gamma},\quad c=C_E^{-q}.
\end{equation}
The first inequality follows from $T_R\le I_R$; both inequalities hold directly when $I_R=0$.
In particular, $cK^\gamma=1$.

If $u\not\equiv0$, exhaustion by balls and local integrability give $R_*>0$ such that
$F_0=\int_{B_o(R_*)}u^q\dd\mu>0$.
Set
\[
 R_k=4^kR_*,\qquad F_k=\int_{B_o(R_k)}u^q\dd\mu,\qquad g_k=G(R_k).
\]
The transition region $\{R_k<r<2R_k\}$ lies in
$B_o(R_{k+1})\setminus B_o(R_k)$, and the cutoff weight is at most one. Hence
\[
 F_k\le I_{R_k},\qquad T_{R_k}\le F_{k+1}-F_k.
\]
Thus \eqref{eq:Edual} supplies both hypotheses of Lemma~\ref{lem:seq}, which gives
\begin{equation}\label{eq:Eseries}
 \sum_{k=0}^\infty\frac{R_k^{2q}}{V_o(2R_k)^\gamma}
 \le\frac{2^{\gamma+1}C_E^q}{\gamma}F_0^{-\gamma}.
\end{equation}
Apply Lemma~\ref{lem:int} with $A=2q$, $L=4$, and $x_k=2R_k$, so that
$b_k=2^{2q}g_k^{-\gamma}$. We obtain the quantitative necessary estimate
\begin{equation}\label{eq:Equant}
 \int_{2R_*}^\infty\frac{R^{2q-1}}{V_o(R)^\gamma}\dd R
 \le\frac{4^{2q}-1}{2q}\,2^{2q}
 \frac{2^{\gamma+1}C_E^q}{\gamma}F_0^{-\gamma}<\infty.
\end{equation}
This contradicts \eqref{eq:Ecrit} and proves Theorem~\ref{thm:E}.
The annular choice of $G$ gives the following refinement.

\begin{corollary}[Elliptic annular criterion]\label{cor:ann}
If, for some $R_*>0$ and $R_k=4^kR_*$,
\[
 \sum_{k=0}^\infty\frac{R_k^{2q}}{W_o(R_k)^{q-1}}=\infty,
\]
then every supersolution in \eqref{eq:Eweak} vanishes almost everywhere.
\end{corollary}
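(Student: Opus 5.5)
We rerun the argument of Theorem~\ref{thm:E} with the annular choice $G(R)=W_o(R)/R^\kappa$ in place of $V_o(2R)/R^\kappa$. The key estimate \eqref{eq:Ekey} needs no change. For the radial cutoff $\phi_R$, the gradient is supported in $\{R<r<2R\}$ and bounded by $R^{-1}$ there. This gives
\[
 \|\nabla\phi_R\|_{p_\pm}\le R^{-1}W_o(R)^{1/p_\pm}.
\]
Using $1/p_++1/p_-=\gamma/q$ and $\kappa\gamma/q=2$, the first inequality in \eqref{eq:Eradial} reads
\[
 I_R\le C_E\,G(R)^{\gamma/q}T_R^{1/q}.
\]
As in the theorem, combining this with $T_R\le I_R$ yields the two inequalities \eqref{eq:Edual} with the same $G$, the same $K=C_E^{q/\gamma}$ and $c=C_E^{-q}$. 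Both hold trivially when $I_R=0$. Since $M$ is noncompact and connected, $0<W_o(R)<\infty$, so $g_k=G(R_k)$ is a positive sequence.

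Now suppose, for contradiction, that $u\not\equiv0$. We choose $R_*$ as in the corollary's hypothesis. We may shrink it along the lattice if needed: replacing $R_*$ by $4^{-j}R_*$ only adds finitely many terms to the divergent series. So we may assume
\[
 F_0=\int_{B_o(R_*)}u^q\dd\mu>0.
\]
Set $R_k=4^kR_*$, $F_k=\int_{B_o(R_k)}u^q\dd\mu$ and $g_k=W_o(R_k)/R_k^\kappa$. As before, $F_k\le I_{R_k}$, and since $\{R_k<r<2R_k\}\subset B_o(R_{k+1})\setminus B_o(R_k)$, also $T_{R_k}\le F_{k+1}-F_k$.

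Then \eqref{eq:Edual} gives the two hypotheses of Lemma~\ref{lem:seq}:
\[
 F_k\le Kg_k,\qquad F_{k+1}-F_k\ge c\,g_k^{-\gamma}F_k^{q}.
\]
Here $q=1+\gamma$ and $F_k>0$ for all $k$. Lemma~\ref{lem:seq} then bounds $\sum_k g_k^{-\gamma}$ by a finite multiple of $F_0^{-\gamma}$. We also have
\[
 g_k^{-\gamma}=\frac{R_k^{\kappa\gamma}}{W_o(R_k)^\gamma}=\frac{R_k^{2q}}{W_o(R_k)^{q-1}},
\]
so $\sum_k R_k^{2q}/W_o(R_k)^{q-1}$ converges. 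This contradicts the hypothesis.

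No integral comparison (Lemma~\ref{lem:int}) is needed, which matters because $W_o$ need not be monotone. The only delicate point is this: Lemma~\ref{lem:seq} must not require monotonicity of $g_k$, and indeed it does not. The step that needs care is the normalization of $R_*$ so that $F_0>0$, which uses only that divergence is insensitive to finitely many terms.
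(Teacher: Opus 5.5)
Apart from one step, this is the paper's proof. You rerun the elliptic argument with $G(R)=W_o(R)/R^\kappa$, feed \eqref{eq:Edual} into Lemma~\ref{lem:seq} (which needs no monotonicity of $g_k$), and stop at the series without invoking Lemma~\ref{lem:int}.

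The step that fails as written is your normalization of $R_*$. Replacing $R_*$ by $4^{-j}R_*$ cannot produce $F_0>0$. The mass $\int_{B_o(R)}u^q\dd\mu$ is nondecreasing in $R$, so if it vanishes on $B_o(R_*)$ it vanishes on every smaller ball.

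You must move outward instead. Since $u\not\equiv0$ and the balls $B_o(4^kR_*)$ exhaust $M$, monotone convergence gives a first index $j$ with $\int_{B_o(4^jR_*)}u^q\dd\mu>0$. Restart the sequence at $R_*'=4^jR_*$; then $F_k\ge F_0>0$ for all subsequent $k$. This deletes the first $j$ terms of the hypothesized series, which therefore still diverges. This is exactly the paper's remark ``start at the first ball with positive mass; deleting finitely many terms does not change divergence.''

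With that correction, the rest of your argument goes through unchanged.
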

If the initial ball has zero mass, start at the first ball with positive mass; deleting finitely many terms does not change divergence.
Since $W_o$ need not be monotone, the annular criterion is retained in series form.

\section{The heat equation: temporal boundaries and spacetime mass}
\subsection{Admissible tests after a positive time}
Let $u$ satisfy the classical supersolution hypotheses of Theorem~\ref{thm:P}, and fix $a>0$.
On $M\times[a,\infty)$, take a Lipschitz cutoff $0\le\phi\le1$ with compact spatial support that vanishes for sufficiently large times, and require $\phi_t\le0$.
All tests are first integrated over a compact spatial support and a finite time interval.
Classical regularity, local boundedness, and fixed $\eps>0$ justify all integrals and the temporal chain rule; spatial Lipschitz cutoffs are justified by local Sobolev approximation.
Choose $\theta,s$ as in \eqref{eq:parameters}. All spacetime integrals below use the measure $\dd\mu\dd t$. Write
\[
 S=\{|\nabla\phi|+|\phi_t|>0\},\quad
 I=\iint u^q\phi^s,\quad T=\iint_Su^q\phi^s,\quad
 T_\eps=\iint_Su_\eps^q\phi^s,
\]
\[
 A_\eps=\iint u_\eps^{-1-\theta}\phi^s|\nabla u|^2,\quad
 B_\eps=\iint_Su_\eps^{1-\theta}\phi^{s-2}|\nabla\phi|^2,
\]
\[
 P_\eps=\iint_Su_\eps^{1-\theta}\phi^{s-1}(-\phi_t),\qquad
 D_\eps=\iint_Su_\eps^{1+\theta}\phi^{s-2}|\nabla\phi|^2.
\]
First test with $u_\eps^{-\theta}\phi^s$.
Integration by parts in time gives the exact identity
\begin{align}\label{eq:timeboundary}
 \iint u_tu_\eps^{-\theta}\phi^s
 &=-\frac1{1-\theta}\int_Mu_\eps(x,a)^{1-\theta}\phi(x,a)^s\dd\mu
   +\frac{s}{1-\theta}P_\eps.
\end{align}
The terminal contribution vanishes with the cutoff, and the initial contribution is nonpositive. This sign permits the parabolic extension.
Combining with the spatial cross term, applying Young's inequality, and discarding the nonnegative term
$\iint u^qu_\eps^{-\theta}\phi^s$, we obtain
\begin{equation}\label{eq:Penergy}
 A_\eps\le \frac{s^2}{\theta^2}B_\eps
       +\frac{2s}{\theta(1-\theta)}P_\eps.
\end{equation}
The original differential inequality still has right-hand side $u^q$.

For the second test, use $\phi^s$. Integration by parts in time yields
\[
 I+\int_Mu(x,a)\phi(x,a)^s\dd\mu
 \le s\iint u\phi^{s-1}(-\phi_t)
      +s\iint\phi^{s-1}\langle\nabla u,\nabla\phi\rangle.
\]
Consequently,
\begin{equation}\label{eq:Psecond}
 I\le s A_\eps^{1/2}D_\eps^{1/2}+s Z,\qquad
 Z=\iint_Su\phi^{s-1}(-\phi_t).
\end{equation}
The initial mass is nonnegative on the left and may therefore be discarded without an error term.

\subsection{H\"older factors and a common transition mass}
Define the finite cutoff energies
\[
 E_\pm=\iint|\nabla\phi|^{p_\pm},\quad
 J_+=\iint(-\phi_t)^{q/(\gamma+\theta)},\quad
 J_0=\iint(-\phi_t)^{q/\gamma}.
\]
The two spatial applications of H\"older's inequality are identical to those in the elliptic case and give
\[
 B_\eps\le T_\eps^{(1-\theta)/q}E_+^{2/p_+},\qquad
 D_\eps\le T_\eps^{(1+\theta)/q}E_-^{2/p_-}.
\]
For the temporal energy term, use the conjugate exponents
$q/(1-\theta)$ and $q/(\gamma+\theta)$. The remaining cutoff power is
\[
 \Big(s-1-\frac{s(1-\theta)}q\Big)\frac q{\gamma+\theta}
 =s-\frac q{\gamma+\theta}>0,
\]
so $P_\eps\le T_\eps^{(1-\theta)/q}J_+^{(\gamma+\theta)/q}$.
Similarly, $s>p_->q/\gamma$ ensures
\[
 Z\le T^{1/q}J_0^{\gamma/q}\le T_\eps^{1/q}J_0^{\gamma/q}.
\]
Substituting into \eqref{eq:Penergy}--\eqref{eq:Psecond}, and setting
$A_1=s^2/\theta^2$ and $A_2=2s/[\theta(1-\theta)]$, we find
\begin{equation}\label{eq:Pkey}
 I\le sT_\eps^{1/q}
 \left[(A_1E_+^{2/p_+}+A_2J_+^{(\gamma+\theta)/q})^{1/2}
 E_-^{1/p_-}+J_0^{\gamma/q}\right].
\end{equation}
The exponents of the two energy masses add to $1/q$, and every mass is retained on the same set $S$.
For a fixed cutoff, local integrable domination allows $\eps\downarrow0$, replacing $T_\eps$ by $T$.

\subsection{Parabolic scaling and the integral criterion}
Take
\[
 \phi_R(x,t)=\chi(r(x)/R)\chi((t-a)/R^2),\qquad
 \Q_R^a=B_o(R)\times(a,a+R^2),
\]
where $\chi$ is the piecewise linear profile of the preceding section.
Then $\phi_R=1$ on $\Q_R^a$, its support is contained in
$\overline{B_o(2R)}\times[a,a+2R^2]$, and
\[
 |\nabla\phi_R|\le R^{-1},\quad -\partial_t\phi_R\le R^{-2},\quad
 S_R\subset(B_o(2R)\times(a,a+2R^2))\setminus\Q_R^a
\]
up to end faces and spheres, which do not affect the integrals.
Let $H_R=2R^2V_o(2R)$. Then
\[
 E_\pm\le R^{-p_\pm}H_R,\quad
 J_+\le R^{-2q/(\gamma+\theta)}H_R,\quad
 J_0\le R^{-2q/\gamma}H_R.
\]
In \eqref{eq:Pkey}, both the squared spatial term and the temporal term have the scale
$R^{-2}H_R^{(\gamma+\theta)/q}$.
It follows that
\begin{equation}\label{eq:Pradial}
 \boxed{I_R\le C_P R^{-2}H_R^{\gamma/q}T_R^{1/q}
 =C_P G_P(R)^{\gamma/q}T_R^{1/q},\qquad
 G_P(R)=\frac{2R^2V_o(2R)}{R^\kappa}.}
\end{equation}
For example, $C_P=s(\sqrt{A_1+A_2}+1)$ is admissible and depends only on $q$.
As in the elliptic case, we obtain
\[
 I_R\le K_PG_P(R),\qquad
 T_R\ge c_PG_P(R)^{-\gamma}I_R^q,
 \quad K_P=C_P^{q/\gamma},\quad c_P=C_P^{-q}.
\]
For fixed $a>0$, if $u$ is nonzero at some time $t>a$, choose $R_*$ large enough that
\[
 F_0=\iint_{\Q_{R_*}^a}u^q>0.
\]
This uses only exhaustion by cylinders and continuity.
Set
\[
 R_k=4^kR_*,\qquad F_k=\iint_{\Q_{R_k}^a}u^q,\qquad g_k=G_P(R_k).
\]
Since $a+2R_k^2<a+R_{k+1}^2$ and $2R_k<R_{k+1}$, the entire transition region lies in
$\Q_{R_{k+1}}^a\setminus\Q_{R_k}^a$.
Thus again $F_k\le I_{R_k}$ and $T_{R_k}\le F_{k+1}-F_k$.
Lemma~\ref{lem:seq} gives
\begin{equation}\label{eq:Pseries}
 \sum_k g_k^{-\gamma}<\infty,\qquad
 g_k^{-\gamma}=2^{-\gamma}\frac{R_k^2}{V_o(2R_k)^\gamma},
\end{equation}
because $\kappa\gamma-2\gamma=2$.
Apply Lemma~\ref{lem:int} with $A=2$, $L=4$, and $x_k=2R_k$ to obtain
\begin{equation}\label{eq:Pquant}
 \int_{2R_*}^\infty\frac{R}{V_o(R)^\gamma}\dd R
 \le \frac{4^2-1}{2}\,2^{\gamma+2}
 \frac{2^{\gamma+1}C_P^q}{\gamma}F_0^{-\gamma}<\infty.
\end{equation}
This contradicts \eqref{eq:Pcrit}.
Hence $u$ vanishes after every $a>0$.
Any nonzero point at a positive time would contradict this conclusion upon choosing a smaller $a$, so $u\equiv0$ on the open cylinder.
A Cauchy solution with continuous initial trace then has $u_0=0$, proving Theorem~\ref{thm:P}.

\begin{remark}
The spatial annular volume $W_o(R)$ alone cannot replace the entire spacetime transition volume in \eqref{eq:Pradial}: when the temporal cutoff varies, points in the inner spatial ball also belong to the transition region.
Keeping the actual spacetime volume of $S_R$ yields a corresponding stronger series criterion; here we use only the upper bound $2R^2V_o(2R)$.
\end{remark}

\section{Center independence and endpoint consequences}
\subsection{Lower limits and centers}
For $0<A<B<\infty$, the bound $V_o(R)\ge V_o(A)>0$ makes both criterion integrands integrable on $[A,B]$.
Thus changing a positive finite lower limit does not affect divergence.
If $d(o,z)=d_0$, then, for sufficiently large $R$,
\[
 B_z(R/2)\subset B_o(R)\subset B_z(2R).
\]
Use these inclusions in the denominators and change variables by $R/2$ and $2R$.
For every $A>0$, it follows that divergence of
$\int^\infty R^{A-1}/V_o(R)^\gamma\dd R$ is independent of the center.
No uniform comparison of volumes with different centers at the same radius is needed.

\subsection{Finitely iterated logarithms}
Let $L_1(R)=\log R$ and $L_{j+1}(R)=\log L_j(R)$, fix a finite integer $N\ge1$, and restrict to $R$ large enough that all logarithms exceed one.
The critical spatial volume powers for the two criteria are, respectively,
\[
 d_E=\frac{2q}{q-1},\qquad d_P=\frac{2}{q-1}.
\]
\begin{corollary}\label{cor:logs}
Choose either the elliptic or the parabolic setting and put $d=d_E$ or $d=d_P$, respectively.
If, for all sufficiently large $R$,
\begin{equation}\label{eq:logvol}
 V_o(R)\le CR^d\prod_{j=1}^{N-1}L_j(R)^\beta L_N(R)^b,
 \qquad b\le\beta,
\end{equation}
then the vanishing conclusion of the corresponding theorem holds.
In particular, all $N$ logarithmic powers may simultaneously equal $\beta$.
\end{corollary}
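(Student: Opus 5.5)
We reduce Corollary~\ref{cor:logs} to the integral criteria \eqref{eq:Ecrit} and \eqref{eq:Pcrit} by inserting the volume bound \eqref{eq:logvol} into their denominators. In both settings the criterion integrand has the form $R^{A-1}/V_o(R)^\gamma$, with $A=2q$ (elliptic) or $A=2$ (parabolic), and $\gamma=q-1$. The critical degrees satisfy $d\gamma=A$: indeed $d_E\gamma=2q$ and $d_P\gamma=2$. For $R$ large, \eqref{eq:logvol} and the identity $\beta\gamma=1$ therefore give
\[
 \frac{R^{A-1}}{V_o(R)^\gamma}\ge C^{-\gamma}\,\frac{R^{A-1}}{R^{A}\prod_{j=1}^{N-1}L_j(R)\,L_N(R)^{b\gamma}}
 =C^{-\gamma}\frac{1}{R\,L_1(R)\cdots L_{N-1}(R)\,L_N(R)^{b\gamma}}.
\]
Only this pointwise lower bound is needed. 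It uses nothing more than positivity of $V_o$ and the upper bound for large $R$.

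Next we show that the right-hand side has a divergent integral whenever $b\gamma\le1$, that is, whenever $b\le\beta$. We differentiate iteratively: $L_{j+1}'(R)=L_j'(R)/L_j(R)$, so
\[
 \frac{\dd}{\dd R}L_N(R)=\frac{1}{R\,L_1(R)\cdots L_{N-1}(R)}.
\]
Substituting $y=L_N(R)$ turns the tail integral into $\int^\infty y^{-b\gamma}\dd y$. This diverges for $b\gamma\le1$; when $b\gamma=1$ it is the logarithmic case $\int^\infty \dd y/y=\infty$. If $b\le0$, the integrand is even larger, so divergence is immediate.

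Since $L_N(R)\to\infty$, the substitution is legitimate on $[R_1,\infty)$ for $R_1$ large enough that all logarithms exceed one. By the remark in Section~5.1, changing the lower limit to any positive $R_0$ does not affect divergence. Hence \eqref{eq:Ecrit} or \eqref{eq:Pcrit} holds, and Theorem~\ref{thm:E} or Theorem~\ref{thm:P} yields the vanishing conclusion. Taking $b=\beta$ gives the case in which all $N$ logarithmic powers equal $\beta$.

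There is no real obstacle in this argument. The only points needing care are the bookkeeping $d\gamma=A$ and $\beta\gamma=1$, which make the polynomial factor reduce exactly to $R^{-1}$, and the verification that the endpoint $b=\beta$ produces the borderline divergent integral $\int^\infty\dd y/y$.
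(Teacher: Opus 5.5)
Your proposal is correct and follows essentially the same route as the paper's proof. Both insert the volume bound to get the pointwise lower bound $C^{-\gamma}\big[R\prod_{j<N}L_j(R)\,L_N(R)^{b\gamma}\big]^{-1}$, then substitute $z=L_N(R)$ to reduce to $\int^\infty z^{-b\gamma}\,\mathrm{d}z$, which diverges for $b\gamma\le 1$.
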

\begin{proof}
In both settings, the volume upper bound gives the following lower bound for the criterion integrand:
\[
 \frac{C^{-\gamma}}{R\prod_{j<N}L_j(R)\,L_N(R)^{b\gamma}}.
\]
Using $L_N'(R)=[R\prod_{j<N}L_j(R)]^{-1}$ and setting $z=L_N(R)$, the comparison integral becomes
$C^{-\gamma}\int^\infty z^{-b\gamma}\dd z$, which diverges when $b\gamma\le1$.
The single, double, and triple logarithmic cases correspond to $N=1,2,3$.
At the simultaneous endpoint, an antiderivative is $L_{N+1}(R)$.
\end{proof}
For a general finite product $\prod_{j=1}^NL_j^{b_j}$, the model integral also diverges if the first exponent differing from $\beta$ is smaller than $\beta$, and converges if that exponent is larger than $\beta$.
Indeed, remove the preceding critical factors and set $z=L_j(R)$.
The remaining finite product of slowly varying factors, denoted by $h(z)$, satisfies
$z^{-\eta}\le h(z)\le z^\eta$ for every $\eta>0$ and all sufficiently large $z$, with constants absorbed by increasing the lower threshold.
Choose $\eta<|b_j\gamma-1|$ and compare with power integrals.
This assertion concerns fixed finite logarithmic depth only.
When $b>\beta$, the upper bound \eqref{eq:logvol} alone does not imply convergence of the criterion integral for the actual volume function.

\section{Existence examples and the limits of the criteria}
\subsection{A Euclidean supercritical elliptic example}
Suppose that $n>2$ and $q>n/(n-2)$, and set
$\alpha=2/(q-1)\in(0,n-2)$ and $f(x)=(1+|x|^2)^{-\alpha/2}$.
Direct differentiation gives
\[
 \begin{gathered}
 -\Delta f=\alpha(1+r^2)^{-\alpha/2-2}
 [n+(n-\alpha-2)r^2]>0,\\
 \frac{f^q}{-\Delta f}=\frac{1+r^2}{\alpha[n+(n-\alpha-2)r^2]}
 \le\frac1{\alpha(n-\alpha-2)}.
 \end{gathered}
\]
Therefore, for a sufficiently small constant $\lambda>0$, the function
$u=\lambda f$ satisfies $-\Delta u>u^q$ on all of $\R^n$.
It is smooth at the origin and has no singularity.

\subsection{Smooth elliptic models with finitely iterated logarithms}
We next give a global flux construction showing that the final logarithmic exponent in the elliptic part of Corollary~\ref{cor:logs} cannot, in general, be increased.
Fix a topological dimension $n\ge2$, a finite $N$, and $b>\beta$.
Choose $0<\delta<b-\beta$ and write
\[
 \ell(r)=\prod_{j<N}L_j(r)^\beta L_N(r)^b,\qquad P(r)=L_N(r)^\delta.
\]
Define these functions beyond a sufficiently large $R_1$.
Choose a smooth transition $0\le\eta\le1$, flat at its endpoints, equal to zero on $[0,R_1]$ and to one on $[2R_1,\infty)$.
Define the density
\[
 A(r)=r^{n-1}\ (r\le R_1),\qquad
 A(r)=\exp\{(1-\eta)(n-1)\log r+
 \eta[(\kappa-1)\log r+\log\ell(r)]\}\ (r>R_1).
\]
Then $A>0$ for $r>0$, and $A=r^{\kappa-1}\ell$ in the tail.
The metric $g=\dd r^2+A(r)^{2/(n-1)}g_{S^{n-1}}$ agrees with the Euclidean metric near the origin and extends smoothly to $\R^n$.
The radial coordinate is the distance from the pole, and closed radial balls are compact. Thus the manifold is complete, and
\[
 V_o(R)=\omega_{n-1}\int_0^R A(r)\dd r
 \sim\frac{\omega_{n-1}}\kappa R^\kappa\ell(R).
\]
This asymptotic relation follows directly from $r\ell'(r)/\ell(r)\to0$ and integral comparison.

To ensure monotonicity throughout the transition, first prescribe a strictly positive flux derivative:
\[
 h(r)=nr^{n-1}\ (r\le R_1),\qquad
 h(r)=(1-\eta)nr^{n-1}+\eta P'(r)\ (r>R_1),\qquad
 H(r)=\int_0^rh(s)\dd s.
\]
Then $H'>0$ for $r>0$, $H=r^n$ near the origin, and
$H=P+C_0\sim P$ in the tail.
Set
\[
 U(r)=\int_r^\infty\frac{H(s)}{A(s)}\dd s.
\]
Since $\kappa>2$, the tail is integrable. Near the origin $H/A=r$, so $U(0)<\infty$ and
\[
 U(r)=U(0)-r^2/2\quad(r\text{ near }0),\qquad
 -\Delta U=H'/A>0.
\]
In particular, $U$ is smooth at the pole and $-\Delta U=n$ there; no point source is present.
In the tail,
\[
 U(r)\sim\frac{r^{2-\kappa}}{\kappa-2}\prod_{j<N}L_j(r)^{-\beta}L_N(r)^{\delta-b},
\]
\[
 -\Delta U=\delta r^{-\kappa}\prod_{j<N}L_j(r)^{-\beta-1}L_N(r)^{\delta-b-1}.
\]
Using $(\kappa-2)q=\kappa$ and $\beta q=\beta+1$, we find
\[
 \frac{U^q}{-\Delta U}\sim
 \frac{L_N(r)^{1-(b-\delta)\gamma}}{\delta(\kappa-2)^q}\longrightarrow0.
\]
The ratio is continuous and bounded on the whole manifold.
Thus, for sufficiently small $\lambda>0$, the function $\lambda U$ is a global strictly positive supersolution.
This proves existence on the specified models; it does not assert that convergence of the volume integral suffices for existence on an arbitrary manifold.

\subsection{Supercritical Euclidean heat flow with small initial data}
To verify the sharpness of the power in the parabolic criterion, assume
$q>1+2/n$, and put $d=n(q-1)/2>1$.
The function
\[
 h(x,t)=(1+t)^{-n/2}\exp\{-|x|^2/[4(1+t)]\}
\]
satisfies $h_t-\Delta h=0$ and $h^{q-1}\le(1+t)^{-d}$.
Choose $a_0>0$ such that $a_0^{-\gamma}>\gamma/(d-1)$, and let
\[
 a(t)=\left[a_0^{-\gamma}-\gamma\int_0^t(1+s)^{-d}\dd s\right]^{-1/\gamma}.
\]
Then $a$ is globally bounded and $a'=a^q(1+t)^{-d}$.
Consequently, $U=a(t)h(x,t)$ satisfies
\[
 U_t-\Delta U=a^q(1+t)^{-d}h\ge a^qh^q=U^q.
\]
For any nonzero $0\le u_0\in C_b(\R^n)$ with
$u_0\le a_0e^{-|x|^2/4}$, start with $v_0(t)=e^{t\Delta}u_0$ and iterate
\[
 v_{j+1}(t)=e^{t\Delta}u_0+\int_0^te^{(t-s)\Delta}v_j(s)^q\dd s.
\]
Positivity of the Euclidean heat semigroup and the supersolution $U$ give
$0\le v_j\le v_{j+1}\le U$.
Dominated convergence on each finite time interval yields a global mild solution.
Standard local regularization by the Euclidean heat kernel then gives a classical solution at positive times with continuous initial trace.
Thus nonzero small initial data do admit global Cauchy solutions when $q>1+2/n$.
This Euclidean construction does not provide a convergence-side existence theorem on arbitrary manifolds, nor does it establish model sharpness at every iterated logarithmic endpoint for the heat equation.

\section{Quasilinear elliptic inequalities: the \texorpdfstring{$p$}{p}-Laplacian}
\label{sec:pelliptic}
The auxiliary linear-diffusion volume exponent was denoted by $\kappa$. We now write
\[
 \Delta_p u=\operatorname{div}(|\nabla u|^{p-2}\nabla u),\qquad p>1.
\]
Thus $\Delta_m$ is $\Delta_p$ with $p=m$; $p$ always denotes the diffusion exponent in this section.
We interpret $|\nabla u|^{p-2}\nabla u$ as zero where $\nabla u=0$.
The elliptic extension has a complete proof with the same two-test mechanism, although the mass exponent changes.

\begin{theorem}[Integral criterion for the $p$-Laplacian]\label{thm:pE}
Let $p>1$, $q>p-1$, and set
\[
 \delta=q-p+1,\qquad \Gamma=\frac{\delta}{p-1},\qquad
 D=\frac{pq}{\delta}.
\]
If
\begin{equation}\label{eq:pEcrit}
 \int_{R_0}^{\infty}
 \frac{R^{pq/(p-1)-1}}{V_o(R)^{(q-p+1)/(p-1)}}\dd R=\infty,
\end{equation}
then every nonnegative $u\in W^{1,p}_{\mathrm{loc}}(M)\cap L^q_{\mathrm{loc}}(M)$ satisfying
\begin{equation}\label{eq:pweak}
 \int_M |\nabla u|^{p-2}\langle\nabla u,\nabla\zeta\rangle\dd\mu
 \ge\int_M u^q\zeta\dd\mu
 \quad(0\le\zeta\in C_c^\infty(M))
\end{equation}
vanishes almost everywhere.
No curvature assumption is imposed on $M$.
\end{theorem}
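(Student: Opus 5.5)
We follow the two-test mechanism of Section~3, with exponents adapted to the $p$-growth. For a compactly supported Lipschitz cutoff $0\le\phi\le1$, set $S=\{|\nabla\phi|>0\}$, $I=\int u^q\phi^s$ and $T=\int_S u^q\phi^s$. Here $s$ is large, $u_\eps=u+\eps$, and $0<\theta<\min\{1,p-1,\delta\}$ is a fixed small parameter depending only on $p,q$. Lemma~\ref{lem:test} extends verbatim to bounded compactly supported $W^{1,p}$ tests, since $|\nabla u|^{p-1}\in L^{p'}_{\rm loc}$ and $\zeta_j\to\zeta$ in $W^{1,p}$. Hence $u_\eps^{-\theta}\phi^s$ and $\phi^s$ are admissible.

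\emph{First test.} Testing with $u_\eps^{-\theta}\phi^s$ gives
\[
 \theta A_\eps\le s\int_S u_\eps^{-\theta}\phi^{s-1}|\nabla u|^{p-1}|\nabla\phi|,
 \qquad A_\eps=\int u_\eps^{-1-\theta}\phi^s|\nabla u|^p .
\]
Apply Young's inequality with exponents $p/(p-1)$ and $p$, writing $u_\eps^{-\theta}|\nabla u|^{p-1}=(u_\eps^{-(1+\theta)(p-1)/p}|\nabla u|^{p-1})\,u_\eps^{(1+\theta)(p-1)/p-\theta}$. After absorption this yields
\[
 A_\eps\le C\int_S u_\eps^{p-1-\theta}\phi^{s-p}|\nabla\phi|^p .
\]

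\emph{Second test.} Testing with $\phi^s$ and applying H\"older's inequality with the same splitting gives
\[
 I\le s A_\eps^{(p-1)/p}\Big(\int_S u_\eps^{(1+\theta)(p-1)}\phi^{s-p}|\nabla\phi|^p\Big)^{1/p}.
\]

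\emph{H\"older in $T_\eps$.} Both integrals are now estimated against $T_\eps$. Apply H\"older with exponents $q/(p-1-\theta)$ and $q/(\delta+\theta)$ in the first, and $q/((1+\theta)(p-1))$ and its conjugate in the second; the latter requires $(1+\theta)(p-1)<q$, which holds for small $\theta$. The total power of $T_\eps$ is
\[
 \frac{(p-1)(p-1-\theta)}{pq}+\frac{(1+\theta)(p-1)}{pq}=\frac{p-1}{q}.
\]
This is the announced transition-mass power $(p-1)/q$. Letting $\eps\downarrow0$ by domination gives
\[
 I\le C\,T^{(p-1)/q}\,\Pi(\phi),
\]
where $\Pi(\phi)$ is a product of two $L^{r_\pm}$ norms of $|\nabla\phi|$ to suitable powers. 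For the radial cutoff $\phi_R$, $|\nabla\phi_R|\le R^{-1}$, and a scaling count yields
\[
 I_R\le C\,R^{-p}V_o(2R)^{\delta/q}T_R^{(p-1)/q}.
\]
The exponent of $V_o$ comes from summing the reciprocal conjugate exponents, which totals $1-(p-1)/q=\delta/q$.

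\emph{Iteration.} The key difficulty is that Lemma~\ref{lem:seq} is stated for a mass power $1/q$. Raise the estimate to the power $1/(p-1)$:
\[
 I_R\le C'\,G(R)^{\Gamma/q}T_R^{1/q},\qquad G(R)=V_o(2R)R^{-pq/\delta}=V_o(2R)R^{-D}.
\]
Indeed $(\delta/q)/(p-1)=\Gamma/q$, and $R^{-p/(p-1)}=(R^{-D})^{\Gamma/q}$ because $D\Gamma/q=p/(p-1)$. Combined with $T_R\le I_R$, this gives
\[
 I_R\le K G(R)^{\Gamma/(q-1)}\quad\text{and}\quad T_R\ge c\,G(R)^{-\Gamma}I_R^q .
\]
Set $\tilde g=G^{\Gamma/(q-1)}$ and $\gamma=q-1$. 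Then $F_k\le K\tilde g_k$ and $F_{k+1}-F_k\ge c\,\tilde g_k^{-\gamma}F_k^{q}$, since $\tilde g^{-\gamma}=G^{-\Gamma}$; this requires $q>1$. If $q\le1$ (possible when $p<2$), rewrite instead using the exponent $\gamma'=q/(p-1)-1=\Gamma$. Raising to the power $q/(p-1)$ from the start gives $F_k\le K G_k$ and $F_{k+1}-F_k\ge c\,G_k^{-\Gamma}F_k^{1+\Gamma}$. This applies directly with $\gamma=\Gamma>0$, valid for all $q>p-1$, so we use this version throughout.

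\emph{Conclusion.} Lemma~\ref{lem:seq} yields $\sum_k R_k^{D\Gamma}V_o(2R_k)^{-\Gamma}<\infty$. Since $D\Gamma=pq/(p-1)$, Lemma~\ref{lem:int} with $A=pq/(p-1)$ converts this into convergence of \eqref{eq:pEcrit}, a contradiction.

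The main obstacle is the exponent bookkeeping: $\theta$ must be chosen so that every H\"older conjugate exceeds one and all leftover cutoff powers $s-\cdots$ are positive. A secondary point is justifying the chain rule for $u_\eps^{-\theta}$ in $W^{1,p}$.
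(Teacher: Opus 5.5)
Your proposal is correct and follows the paper's proof essentially step for step. It uses the same two tests $u_\eps^{-\theta}\phi^s$ and $\phi^s$, the same H\"older splitting with transition-mass power $(p-1)/q$, the rearrangement into $I_R\le KG(R)$ and $T_R\ge cG(R)^{-\Gamma}I_R^{1+\Gamma}$, and then Lemma~\ref{lem:seq} with exponent $\Gamma$ followed by Lemma~\ref{lem:int}.

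Two small slips do not affect the argument you finally use. First, the parameter range should be $\theta<\min\{p-1,\delta/(p-1)\}$, as in \eqref{eq:pparameters}; your $\theta<\delta$ does not guarantee $(1+\theta)(p-1)<q$ when $p>2$. Second, your discarded first iteration variant is misstated: raising the estimate to the power $1/(p-1)$ leaves $I_R^{1/(p-1)}$, not $I_R$, on the left.
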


\subsection{The two tests and their admissibility}
The bounded-test approximation used previously extends from $W^{1,2}$ to $W^{1,p}$.
Indeed, the flux in \eqref{eq:pweak} belongs locally to $L^{p/(p-1)}$, so strong $W^{1,p}$ approximation passes the left-hand side to the limit.
Nonnegative bounded approximation, with fixed compact support, gives dominated convergence on the right-hand side because $u^q\in L^1_{\mathrm{loc}}$.
This permits every nonnegative bounded compactly supported $W^{1,p}$ test.

Choose fixed parameters
\begin{equation}\label{eq:pparameters}
 0<\vartheta<\min\left\{p-1,\frac{\delta}{p-1}\right\},\qquad
 a_+=\frac{pq}{\delta+\vartheta},\quad
 a_-=\frac{pq}{\delta-(p-1)\vartheta},\qquad s>a_-.
\end{equation}
They satisfy $p<a_+<D<a_-<s$.
Given $0\le\phi\le1$ compactly supported and Lipschitz, write
\[
 S=\{|\nabla\phi|>0\},\qquad
 I=\int u^q\phi^s,\qquad T=\int_Su^q\phi^s,\qquad
 v=u+\eps,
\]
and define
\begin{align*}
 A_\eps&=\int v^{-1-\vartheta}\phi^s|\nabla u|^p,\\
 B_\eps&=\int_S v^{p-1-\vartheta}\phi^{s-p}|\nabla\phi|^p,\\
 D_\eps&=\int_S v^{(p-1)(1+\vartheta)}\phi^{s-p}|\nabla\phi|^p,\qquad
 T_\eps=\int_Sv^q\phi^s.
\end{align*}
Here and below an unspecified elliptic integral uses Riemannian volume.
For each fixed $\eps>0$, $v^{-\vartheta}\phi^s$ is a bounded admissible Sobolev test, by the chain rule.
Testing with this function and retaining the sign of the reaction term gives
\[
 \vartheta A_\eps+\int u^qv^{-\vartheta}\phi^s
 \le s\int_Sv^{-\vartheta}\phi^{s-1}
              |\nabla u|^{p-1}|\nabla\phi|
 \le s A_\eps^{(p-1)/p}B_\eps^{1/p}.
\]
Consequently,
\begin{equation}\label{eq:penergy}
 A_\eps\le(s/\vartheta)^pB_\eps.
\end{equation}
The pure cutoff test $\phi^s$ gives
\begin{equation}\label{eq:psecond}
 I\le s A_\eps^{(p-1)/p}D_\eps^{1/p}.
\end{equation}
The exponents $p-1-\vartheta$ and $(p-1)(1+\vartheta)$ both lie strictly between zero and $q$.
H\"older's inequality therefore yields
\begin{align}\label{eq:pHolder}
 B_\eps&\le T_\eps^{(p-1-\vartheta)/q}
                  \|\nabla\phi\|_{a_+}^{p},&
 D_\eps&\le T_\eps^{(p-1)(1+\vartheta)/q}
                  \|\nabla\phi\|_{a_-}^{p}.
\end{align}
For example, the remaining cutoff power in the first application, after raising it to the conjugate exponent, is $s-a_+>0$; the second one is $s-a_->0$.
The two mass powers combine exactly as
\[
 \frac{(p-1)(p-1-\vartheta)+(p-1)(1+\vartheta)}{pq}
 =\frac{p-1}{q}.
\]
The estimate $v^q\le\max\{1,2^{q-1}\}(u^q+1)$ supplies local domination even if $q<1$. Combining \eqref{eq:penergy}--\eqref{eq:pHolder} and letting $\eps\downarrow0$ only in $T_\eps$ proves
\begin{equation}\label{eq:pkey}
 \boxed{\ I\le C_p T^{(p-1)/q}
          \|\nabla\phi\|_{a_+}^{p-1}\|\nabla\phi\|_{a_-},
       \qquad C_p=\frac{s^p}{\vartheta^{p-1}}.\ }
\end{equation}
There is no need to pass a singular weighted gradient integral to the limit.

\subsection{Radial localization and the volume integral}
Use $\phi_R=\chi(r/R)$ and $W_o(R)=\mu(\{R<r<2R\})$.
Since
\[
 \frac{p-1}{a_+}+\frac1{a_-}=\frac{\delta}{q},
\]
\eqref{eq:pkey} implies
\begin{equation}\label{eq:pradial}
 I_R\le C_pR^{-p}W_o(R)^{\delta/q}T_R^{(p-1)/q}
 \le C_pG(R)^{\delta/q}T_R^{(p-1)/q},\qquad
 G(R)=\frac{V_o(2R)}{R^D}.
\end{equation}
Using $T_R\le I_R$ and then rearranging the very same estimate gives
\begin{equation}\label{eq:pdual}
 I_R\le K_pG(R),\qquad
 T_R\ge c_pG(R)^{-\Gamma}I_R^{1+\Gamma},\qquad
 K_p=C_p^{q/\delta},\quad c_p=C_p^{-q/(p-1)}.
\end{equation}
In particular $c_pK_p^{\Gamma}=1$.
If $u\not\equiv0$, choose $R_*$ with $F_0=\int_{B_o(R_*)}u^q>0$, and let
\[
 R_k=4^kR_*,\qquad F_k=\int_{B_o(R_k)}u^q,\qquad g_k=G(R_k).
\]
The inclusion of transition regions and \eqref{eq:pdual} imply
\[
 F_k\le K_pg_k,\qquad
 F_{k+1}-F_k\ge c_pg_k^{-\Gamma}F_k^{1+\Gamma}.
\]
Lemma~\ref{lem:seq}, now with exponent $\Gamma$, gives the quantitative necessary condition
\begin{equation}\label{eq:pseries}
 \sum_{k=0}^\infty
 \frac{R_k^{pq/(p-1)}}{V_o(2R_k)^{\Gamma}}
 \le\frac{2^{\Gamma+1}}{\Gamma c_p}F_0^{-\Gamma}<\infty.
\end{equation}
The monotone-volume comparison in Lemma~\ref{lem:int}, with power $pq/(p-1)$ and geometric ratio $4$, converts this to convergence of the integral in \eqref{eq:pEcrit}.
This contradiction proves Theorem~\ref{thm:pE}.
Replacing $V_o(2R)$ by $W_o(R)$ throughout gives the further sufficient condition
\[
 \sum_{k=0}^\infty
 \frac{R_k^{pq/(p-1)}}{W_o(R_k)^{\Gamma}}=\infty.
\]
This refinement is kept in series form, since annular volumes need not be monotone.

\subsection{Logarithmic endpoints and the Euclidean threshold}
At the critical polynomial volume exponent $D=pq/(q-p+1)$, the estimate
\[
 V_o(R)\le CR^D(\log R)^b,\qquad
 b\le\frac{p-1}{q-p+1},
\]
implies \eqref{eq:pEcrit}.
More generally, for every fixed positive integer $N$, the endpoint
\begin{equation}\label{eq:plogs}
 V_o(R)\le CR^D
 \left(\prod_{j=1}^{N}\log_jR\right)^{(p-1)/(q-p+1)}
\end{equation}
also implies nonexistence, where $\log_1R=\log R$ and
$\log_{j+1}R=\log(\log_jR)$ for sufficiently large $R$.
Indeed, the integrand is then bounded below by a constant multiple of
$[R\prod_{j=1}^{N}\log_jR]^{-1}$, whose integral diverges.

For $M=\R^n$, \eqref{eq:pEcrit} is equivalent to $n\le D$.
Thus for $n>p$ it yields the full range
\begin{equation}\label{eq:pEuclidean}
 p-1<q\le\frac{n(p-1)}{n-p},
\end{equation}
including equality, while for $n\le p$ it excludes every $q>p-1$.
The Euclidean upper endpoint is sharp even for weak supersolutions.
To verify this directly, assume $n>p$ and $q>n(p-1)/(n-p)$, and set
\[
 b=\frac{p-1}{q-p+1},\qquad \tau=\frac p{p-1},\qquad
 U(x)=A(1+|x|^\tau)^{-b}.
\]
Here $c_0=n-p(b+1)>0$, and radial differentiation gives
\begin{align*}
 -\Delta_pU
  &=(Ab\tau)^{p-1}(1+r^\tau)^{-(b+1)(p-1)-1}
                     [n+c_0r^\tau]\\
  &\ge (b\tau)^{p-1}c_0A^{-(q-p+1)}U^q.
\end{align*}
For sufficiently small $A>0$, this is a positive supersolution of
$-\Delta_pU\ge U^q$ on all of $\R^n$.
Its radial flux has zero flux through shrinking spheres at the origin, so the calculation proves the global weak inequality.
The local $W^{1,p}$ regularity is sufficient; $C^2$ regularity at the origin is not claimed when $p>2$.

\subsection{The critical reaction and a spectral alternative}
Define the bottom of the $p$-spectrum by
\[
 \lambda_{1,p}(M)=
 \inf_{0\ne\psi\in C_c^\infty(M)}
 \frac{\int_M|\nabla\psi|^p\dd\mu}{\int_M|\psi|^p\dd\mu}.
\]
At $q=p-1$, a different sufficient condition is available.
\begin{proposition}[A critical-reaction criterion]\label{prop:pcritical}
If $\lambda_{1,p}(M)<1$, there is no nonzero nonnegative weak supersolution of
$-\Delta_pu\ge u^{p-1}$ in $W^{1,p}_{\mathrm{loc}}(M)$.
In particular this holds if
\begin{equation}\label{eq:pentropy}
 h_o:=\limsup_{R\to\infty}\frac{\log V_o(R)}R<p.
\end{equation}
\end{proposition}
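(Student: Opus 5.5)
Since $\lambda_{1,p}(M)<1$, fix $\psi\in C_c^\infty(M)$ with $\int|\nabla\psi|^p<\int|\psi|^p$. Replacing $\psi$ by $|\psi|$, which is Lipschitz with the same Rayleigh quotient, we may take $\psi\ge0$. The plan is to show that a nonzero nonnegative supersolution $u$ of $-\Delta_pu\ge u^{p-1}$ forces $\int|\nabla\psi|^p\ge\int\psi^p$ for every such $\psi$, i.e.\ $\lambda_{1,p}(M)\ge1$. This is a Picone/Allegretto--Huang argument. Suppose $u\not\equiv0$.

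\emph{Step 1: the test.} Put $v=u+\eps$ and use the bounded compactly supported $W^{1,p}$ test $\zeta=\psi^p v^{1-p}$. It is admissible by the extension of Lemma~\ref{lem:test} to $W^{1,p}$ noted at the start of this section, since $v\ge\eps$. The weak inequality gives
\[
 \int u^{p-1}v^{1-p}\psi^p
 \le\int|\nabla u|^{p-2}\langle\nabla u,\nabla(\psi^pv^{1-p})\rangle.
\]
The right-hand side equals
\[
 p\int\psi^{p-1}v^{1-p}|\nabla u|^{p-2}\langle\nabla u,\nabla\psi\rangle-(p-1)\int\psi^pv^{-p}|\nabla u|^p .
\]
Young's inequality $p\,a^{p-1}b\le(p-1)a^p+b^p$, applied with $a=\psi|\nabla u|/v$ and $b=|\nabla\psi|$, bounds this by $\int|\nabla\psi|^p$. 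Hence
\[
 \int\Big(\frac{u}{u+\eps}\Big)^{p-1}\psi^p\le\int|\nabla\psi|^p
 \quad\text{for all }\eps>0 .
\]

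\emph{Step 2: positivity of $u$.} Let $\eps\downarrow0$ by monotone convergence. The left side tends to $\int_{\{u>0\}}\psi^p$, so it suffices to know that $u>0$ a.e. Since $-\Delta_pu\ge u^{p-1}\ge0$, $u$ is a nonnegative $p$-superharmonic function on the connected manifold $M$. The weak Harnack inequality for $p$-supersolutions (Trudinger), applied in local coordinate charts, shows that $u$ has a positive essential infimum on compacts whenever $u\not\equiv0$. With $u>0$ a.e., we get $\int\psi^p\le\int|\nabla\psi|^p$, contradicting the choice of $\psi$.

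\emph{Step 3: the entropy sufficient condition.} Assume $h_o<p$, and choose $h_o<h<p$ together with small $\alpha>0$. Test the Rayleigh quotient with radial functions $\psi_R=e^{-\alpha r}\chi(r/R)$, which are Lipschitz with compact support. Since $V_o(R)\le Ce^{hR}$, we need $\alpha>h/p$ for the integrals $\int e^{-p\alpha r}$ to converge uniformly in $R$. Using $|\nabla r|\le1$ and $(\alpha+R^{-1})^p\to\alpha^p$, the quotient is asymptotically at most $\alpha^p$ plus tail terms. The tail terms vanish as $R\to\infty$ because $R^{-p}\int_{B_o(2R)\setminus B_o(R)}e^{-p\alpha r}\to0$. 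Any $\alpha\in(h/p,1)$ gives $\alpha^p<1$, and such $\alpha$ exists because $h<p$. Therefore $\lambda_{1,p}(M)\le\alpha^p<1$, and the first part applies.

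\emph{Main obstacle.} The main obstacle is Step 2: justifying strict positivity without invoking more regularity than $W^{1,p}_{\mathrm{loc}}$. If the weak Harnack route proves awkward, an alternative is to pick $\psi$ supported in a ball where $u\not\equiv0$. Near-optimal $\psi$, however, need not stay inside $\{u>0\}$, so Harnack seems the cleanest route.
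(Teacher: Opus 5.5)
Your proposal is correct and follows essentially the paper's proof: the same Picone-type test $\psi^p(u+\eps)^{1-p}$ with Young's inequality, positivity of $u$ from the strong minimum principle (which you obtain via Trudinger's weak Harnack inequality in charts plus a connectedness argument), and the Rayleigh tests $e^{-\alpha r}\chi(r/R)$ for the entropy condition. The one step you leave implicit is that the Rayleigh infimum over compactly supported Lipschitz functions equals the infimum over $C_c^\infty$ functions; alternatively, apply Steps 1--2 directly to $\psi_R$. The paper also handles this point by an approximation remark.
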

\begin{proof}
A nonzero nonnegative weak $p$-superharmonic function on a connected manifold is positive almost everywhere, by the local strong minimum principle.
This standard fact can also be obtained from the logarithmic energy test
$(u+\eps)^{1-p}\phi^p$: it bounds
$\int\phi^p|\nabla\log(u+\eps)|^p$ uniformly in $\eps$.
On a ball with a zero set of positive measure, the local Poincar\'e inequality for functions vanishing on that set then bounds
$\log(1+u/\eps)$ in $L^p$, forcing $u=0$ on the ball; overlapping balls and connectedness propagate this conclusion.

For $0\le\phi\in C_c^\infty(M)$, test the supersolution inequality with
$\phi^p/(u+\eps)^{p-1}$.
Young's inequality in the form
$pa^{p-1}b-(p-1)a^p\le b^p$ gives
\[
 \int_M\left(\frac{u}{u+\eps}\right)^{p-1}\phi^p\dd\mu
 \le\int_M|\nabla\phi|^p\dd\mu.
\]
Since $u>0$ almost everywhere, letting $\eps\downarrow0$ shows
$\lambda_{1,p}(M)\ge1$.
The same Rayleigh infimum may be taken over nonnegative compactly supported Lipschitz functions, by approximation.

To verify the last assertion, fix $\alpha>h_o/p$ when $h_o<\infty$.
The definition of $h_o$ gives $e^{-\alpha r}\in W^{1,p}(M)$ and
\[
 \int_M|\nabla e^{-\alpha r}|^p\dd\mu
 \le\alpha^p\int_Me^{-p\alpha r}\dd\mu<\infty.
\]
Multiplying by $\chi(r/R)$ and sending $R\to\infty$ produces admissible Rayleigh tests and proves
$\lambda_{1,p}(M)\le\alpha^p$.
Letting $\alpha\downarrow h_o/p$ yields
$\lambda_{1,p}(M)\le(h_o/p)^p$.
Thus \eqref{eq:pentropy} implies $\lambda_{1,p}(M)<1$.
\end{proof}

\subsection{Counterexamples outside the reaction range}
The restriction $q>p-1$ enters the admissible parameters, the absorption of the transition mass, and the positive summation exponent $\Gamma$.
At $q=p-1$ the problem has a spectral character.
For example, on $n$-dimensional hyperbolic space with $n\ge2$ and constant curvature $-\kappa_H^2$, let $b$ be a Busemann function with $|\nabla b|=1$ and $\Delta b=h=(n-1)\kappa_H$.
Then
\[
 u=e^{-ab},\qquad
 -\Delta_pu=a^{p-1}[h-(p-1)a]u^{p-1}.
\]
Choosing $a=h/p$ gives the coefficient $(h/p)^p$.
When $h\ge p$, this is a positive supersolution with $q=p-1$; when $h=p$, it is an exact solution.
In particular the integral formula cannot be continued to $q=p-1$ by setting its volume exponent equal to zero.

The absence of a general theorem below that reaction threshold can also be seen without a spectral existence theorem.
Fix $n\ge2$ and an even integer $a>p$, and consider the complete rotational model
\[
 g=\dd r^2+f(r)^2g_{\mathbb S^{n-1}},\qquad f(r)=r e^{r^a}.
\]
Its area and volume functions are
$A(r)=|\mathbb S^{n-1}|f(r)^{n-1}$ and $V(r)=\int_0^rA(s)\dd s$.
The positive function
\[
 \mathcal T(r)=\int_r^\infty
          \left(\frac{V(s)}{A(s)}\right)^{1/(p-1)}\dd s
\]
is bounded: near the pole $V/A\sim r/n$, and at infinity
$V/A\sim[(n-1)a r^{a-1}]^{-1}$, which is integrable after raising to $1/(p-1)$ because $a>p$.
The flux identity
\[
 A(r)|\mathcal T'(r)|^{p-2}\mathcal T'(r)=-V(r)
\]
shows that $-\Delta_p\mathcal T=1$ weakly on the entire model, including the pole.
Normalize $U=\mathcal T/\mathcal T(0)$, so $0<U\le1$.
Under the complete rescaled metric $g_L=L^2g$,
\[
 -\Delta_{p,g_L}U=L^{-p}\mathcal T(0)^{-(p-1)}.
\]
Taking $L$ sufficiently small gives $-\Delta_{p,g_L}U\ge1\ge U^q$ for every $q>0$.
Consequently, a nonexistence assertion valid on every complete manifold cannot be obtained for $0<q\le p-1$ by formally continuing \eqref{eq:pEcrit}.
This example is consistent with Theorem~\ref{thm:pE}, since its superexponential volume makes the integral in \eqref{eq:pEcrit} converge whenever $q>p-1$.

\section{Intrinsic cylinders for the parabolic \texorpdfstring{$m$}{m}-Laplacian}\label{sec:mP}
Write $p=m>1$ in this section, and set
\[
 \Delta_pu=\operatorname{div}(|\nabla u|^{p-2}\nabla u),\qquad
 q>\max\{1,p-1\},\qquad d=q-p+1>0.
\]
The time scale requires a new argument when $p\ne2$: multiplying a solution by a constant changes the relative strength of its time derivative and its diffusion. The fixed heat scale $R^2$ therefore cannot simply be retained.

\subsection{Solution class and the integral criterion}
For precision, we use the following distributional energy class:
\[
 u\in L^q_{\mathrm{loc}}(M\times(0,\infty))
 \cap L^p_{\mathrm{loc}}((0,\infty);W^{1,p}_{\mathrm{loc}}(M)).
\]
The nonnegative function $u$ is a supersolution when
\begin{equation}\label{eq:mPweak}
 \iint \bigl(-u\,\partial_t\zeta+
 |\nabla u|^{p-2}\langle\nabla u,\nabla\zeta\rangle\bigr)
 \dd\mu\dd t\ge\iint u^q\zeta\dd\mu\dd t
 \quad(0\le\zeta\in C_c^\infty(M\times(0,\infty))).
\end{equation}
No local boundedness or integrable time derivative is assumed. The temporal regularization needed for the nonlinear test is proved below. Smooth supersolutions for which the diffusion is interpreted weakly are included; the flux need not be differentiable at a point where $\nabla u=0$.

\begin{theorem}[Intrinsic parabolic integral criterion]\label{thm:mP}
Let $p>1$ and $q>\max\{1,p-1\}$. If
\begin{equation}\label{eq:mPcrit}
 \int_{R_0}^{\infty}\frac{R^{p-1}}{V_o(R)^{q-p+1}}\dd R=\infty,
\end{equation}
then every nonnegative global supersolution of \eqref{eq:mPweak} in the class just specified vanishes almost everywhere on $M\times(0,\infty)$.
Consequently, the Cauchy problem
\[
 u_t-\Delta_pu=u^q,\qquad u(\cdot,0)=u_0\ge0,
\]
has no nonzero global solution in this class with a local $L^1$ initial trace $u_0\not\equiv0$.
No curvature, doubling, Poincar\'e, or heat-kernel hypothesis is imposed.
\end{theorem}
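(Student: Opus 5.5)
The proof follows the two-test scheme of Section~4, combined with the $p$-Laplacian exponents of Section~7. The time length is intrinsic rather than $R^2$.

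\textbf{Temporal regularization.} Fix $a>0$. Since no time derivative is assumed, replace $u$ by its forward Steklov average $u_h(x,t)=h^{-1}\int_t^{t+h}u(x,\tau)\dd\tau$. The weak inequality \eqref{eq:mPweak} gives $\partial_tu_h-\operatorname{div}[(|\nabla u|^{p-2}\nabla u)_h]\ge(u^q)_h$ against nonnegative spatial tests, for a.e.\ $t$. Test this with $(u_h+\eps)^{-\vartheta}\phi^s$ and with $\phi^s$, where $\phi$ is Lipschitz, $\phi_t\le0$, and $\phi$ vanishes for large $t$ and near a starting time $t_0>a$ chosen as a Lebesgue point. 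Integrating by parts in time gives the analogue of \eqref{eq:timeboundary}, with a nonpositive initial contribution. Then let $h\downarrow0$, using $L^p_{\rm loc}W^{1,p}$ convergence of the flux and $L^q_{\rm loc}$ convergence of $u_h$. Only $A_\eps$ needs Fatou; it appears on the favourable side. This yields
\[
 A_\eps\le C(B_\eps+P_\eps),\qquad I\le sA_\eps^{(p-1)/p}D_\eps^{1/p}+sZ,
\]
with $A_\eps,B_\eps,D_\eps$ as in Section~7 (now spacetime integrals), $P_\eps=\iint_Sv^{1-\vartheta}\phi^{s-1}(-\phi_t)$, and $Z=\iint_Su\phi^{s-1}(-\phi_t)$. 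I expect this justification to be the main technical obstacle, especially the chain rule for $(u_h+\eps)^{1-\vartheta}$ and the passage to the limit in the cross term.

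\textbf{Hölder step and the two-exponent mass lemma.} Hölder with the exponents of \eqref{eq:pHolder} bounds $B_\eps$ and $D_\eps$. $P_\eps$ is bounded by $T^{(1-\vartheta)/q}J^{\cdot}$, and $Z$ by $T^{1/q}J_0^{(q-1)/q}$, where $J_0=\iint(-\phi_t)^{q/(q-1)}$; this uses $q>1$. The resulting key estimate has two mass powers:
\[
 I\le C\bigl(X\,T^{(p-1)/q}+Y\,T^{1/q}\bigr).
\]
Here $X$ and $Y$ are cutoff energies. The term $P_\eps$ carries mass exponent $(1-\vartheta)/q$ and enters with power $(p-1)/p$, so it must be matched by adjusting $\vartheta$, or dominated by Young into the two endpoint forms.

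Take $\phi_R=\chi(r/R)\chi((t-a)/T(R))$ with $T(R)=R^pV_o(2R)^{p-2}$ as announced. With $H=T(R)V_o(2R)$, the spatial term scales like $R^{-p}H^{d/q}$ and the temporal term like $T(R)^{-1}H^{(q-1)/q}$. The choice of $T(R)$ makes these comparable: both equal $G^{\,\cdot}$ for the single quantity $G(R)=T(R)V_o(2R)\,T(R)^{-q/(q-1)}$ up to the relevant powers.

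Then prove a two-exponent analogue of Lemma~\ref{lem:seq}. Suppose $F_k\le I_k$, $T_k\le F_{k+1}-F_k$, and $I\le C(g^{\alpha_1}T^{\beta_1}+g^{\alpha_2}T^{\beta_2})$. Then either $I_k\le Kg_k$, or we get an increment bound of the form used in \eqref{eq:seq}. In each case one obtains $F_{k+1}-F_k\ge c\,\min_i(\cdots)$. The negative-power telescoping of Lemma~\ref{lem:seq} then gives $\sum_k R_k^{p}V_o(2R_k)^{-d}<\infty$. I would check that both branches reduce to the same summand $R^p/V_o^{\,d}$; this is where the specific $T(R)$ is forced.

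\textbf{Nesting and conclusion.} For $p\ge2$, $T(R)$ is nondecreasing. For $1<p<2$ it may fail to be, so select record scales: indices $k$ with $T(R_k)>\max_{j<k}T(R_j)\cdot C$. Along this subsequence the cylinders $B_o(R_k)\times(a,a+T(R_k))$ are nested, with transition regions disjoint. The skipped terms are controlled by the recorded ones, because $V_o$ is monotone and $R^pV_o^{-d}$ is comparable along runs. Lemma~\ref{lem:int} with $A=p$ then converts the convergent series into convergence of \eqref{eq:mPcrit}, a contradiction. Hence $u=0$ a.e.\ after each $a>0$.

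For the Cauchy statement, a local $L^1$ initial trace means $u(\cdot,t)\to u_0$ in $L^1_{\rm loc}$ as $t\downarrow0$. Since $u=0$, this forces $u_0=0$.
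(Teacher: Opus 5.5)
Your architecture matches the paper's: Steklov averages, the tests $(u_h+\eps)^{-\vartheta}\phi^s$ and $\phi^s$ with favourable initial terms, the intrinsic time $T(R)=R^pV_o(2R)^{p-2}$, a two-exponent mass lemma, and Lemma~\ref{lem:int} with $A=p$. (Your $G$ equals $h^{-1/(q-1)}$ for the paper's weight $h(R)=R^p/V_o(2R)^d$.)

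\textbf{The gap: nesting for $1<p<2$.} Your rule keeps $k$ when $T_k>C\max_{j<k}T_j$. Since the cutoff reaches time $2T$, nesting forces $C\ge2$, and then such indices can be finitely many although $\sum_kh_k=\infty$. Take $p=3/2$, $q=6/5$ and a surface of revolution with $V_o(R)\sim cR^{3/2}$. Then $T_{k+1}/T_k\to2^{3/4}<2$, so eventually nothing is selected, while \eqref{eq:mPcrit} diverges.

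Your justification for the skipped terms also fails. Monotonicity of $V_o$ alone gives only $h_k\le2^{p(k-j)}h_j$ for $k>j$, which grows along a run. The missing idea is the exact identity $h_k=R_k^{-\lambda}T_k^{\rho}$, with $\rho=d/(2-p)$ and $\lambda=p(\rho-1)$, where $\rho>1$ precisely because $q>1$. It gives two things:
\begin{itemize}
\item $T_k$ is unbounded when $\sum_kh_k=\infty$, since otherwise the series converges geometrically. Hence the greedy choice relative to the last selected index never terminates: $j_{\ell+1}$ is the first $k>j_\ell$ with $T_k\ge2T_{j_\ell}$.
\item On each block $j_\ell\le k<j_{\ell+1}$ one has $T_k<2T_{j_\ell}$. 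Hence $h_k\le2^{\rho}2^{-\lambda(k-j_\ell)}h_{j_\ell}$, and so $\sum_kh_k\le C\sum_\ell h_{j_\ell}$.
\end{itemize}
In words, failure of $T$ to double forces $V_o(2R_k)^{2-p}>\tfrac12(R_k/R_{j_\ell})^pV_o(2R_{j_\ell})^{2-p}$. This volume growth beats the numerator $R^p$ of $h$ only because $d>2-p$.

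\textbf{Smaller points.}
\begin{itemize}
\item The mixed term $P_\eps^{(p-1)/p}D_\eps^{1/p}$ has mass exponent $2(p-1)/(pq)$ for every $\vartheta$, so adjusting $\vartheta$ cannot help. What works is the Young route: with $W=2T(R)V_o(2R)$, $\alpha=(p-1)/q$, $\chi=1/q$, $A_R=R^{-p}W^{1-\alpha}$ and $A_T=T^{-1}W^{1-\chi}$, the mixed term is exactly $(A_RJ^{\alpha})^{1/p}(A_TJ^{\chi})^{(p-1)/p}$. Weighted AM--GM then splits it into the two endpoint terms.
\item In the mass lemma you need the upper bound (from taking the mass equal to $I$) and the increment bound simultaneously, not as alternatives. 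The exponents $\gamma_1=d/(p-1)$ and $\gamma_2=q-1$ must also be unified. Using $F_k\ge F_0>0$, one gets $F_{k+1}-F_k\ge c_1h_kF_k^{1+\gamma}$ and $h_kF_k^{\gamma}\le K$, where $\gamma=\min\{\gamma_1,\gamma_2\}$ and the constants depend on $F_0$. Then Lemma~\ref{lem:seq} applies with $g_k=h_k^{-1/\gamma}$.
\item At fixed $\eps$ the Steklov limit cannot be taken by Fatou, because $\langle\mathcal A_h,\nabla u_h\rangle$ has no sign. Use instead that this pairing converges to $|\nabla u|^p$ in $L^1_{\mathrm{loc}}$, while the weights are bounded and converge a.e.\ along a subsequence.
\item You must impose $\vartheta<1$ so that $P_\eps$ carries a positive mass power. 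The Section~7 constraints do not guarantee this when $p>2$.
\end{itemize}
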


\subsection{A two-exponent mass lemma}
\begin{lemma}\label{lem:mPmass}
Let $0<a,c<1$, let $(h_k)$ be any positive sequence, and let $(F_k)$ be a positive nondecreasing sequence. Suppose, for a fixed $C>0$, that
\begin{align}
 F_k&\le C\max\{h_k^{-1/\gamma_1},h_k^{-1/\gamma_2}\},
 \label{eq:mPupper}\\
 F_k&\le C\left[
 \left(\frac{F_{k+1}-F_k}{h_k}\right)^a+
 \left(\frac{F_{k+1}-F_k}{h_k}\right)^c\right],
 \label{eq:mPincrement}
\end{align}
where $\gamma_1=a^{-1}-1$ and $\gamma_2=c^{-1}-1$. Then $\sum_kh_k<\infty$.
\end{lemma}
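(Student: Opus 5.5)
The plan is to split the indices $k$ into \emph{doubling} steps, where $F_{k+1}\ge2F_k$, and \emph{non-doubling} steps, where $F_{k+1}-F_k<F_k$. We bound $\sum h_k$ separately over each class: the mass upper bound \eqref{eq:mPupper} handles the doubling steps, and the increment bound \eqref{eq:mPincrement} handles the others. Write $\Delta_k=F_{k+1}-F_k\ge0$ and $x_k=\Delta_k/h_k$.

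\emph{Non-doubling steps.} Fix $k$ with $\Delta_k<F_k$. By \eqref{eq:mPincrement}, one of the two terms dominates, so there is $e_k\in\{a,c\}$ with $F_k\le 2Cx_k^{e_k}$; in particular $\Delta_k>0$. Put $\gamma_{e}=e^{-1}-1$, so that $1/e=1+\gamma_e$. Solving for $h_k$ gives
\[
 h_k\le(2C)^{1+\gamma_{e_k}}\,\Delta_k\,F_k^{-1-\gamma_{e_k}}.
\]
Since $F_k\le F_{k+1}<2F_k$, an integral comparison gives
\[
 F_k^{-\gamma_e}-F_{k+1}^{-\gamma_e}
 =\gamma_e\int_{F_k}^{F_{k+1}}t^{-1-\gamma_e}\dd t
 \ge\gamma_e2^{-1-\gamma_e}\Delta_kF_k^{-1-\gamma_e}.
\]
Hence $h_k\le C'\,(F_k^{-\gamma_{e_k}}-F_{k+1}^{-\gamma_{e_k}})$, with $C'$ depending only on $C,a,c$. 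Because $F$ is nondecreasing, every increment $F_j^{-\gamma}-F_{j+1}^{-\gamma}$ is nonnegative for both $\gamma=\gamma_1$ and $\gamma=\gamma_2$. So the sum over non-doubling indices is at most $C'(F_0^{-\gamma_1}+F_0^{-\gamma_2})$ after telescoping each exponent over all $k$. No boundedness of the relative increment is needed here, in the spirit of Lemma~\ref{lem:seq}.

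\emph{Doubling steps.} Inverting \eqref{eq:mPupper} gives $h_k\le\max\{(C/F_k)^{\gamma_1},(C/F_k)^{\gamma_2}\}\le C^{\gamma_1}F_k^{-\gamma_1}+C^{\gamma_2}F_k^{-\gamma_2}$. Indeed, if the max in \eqref{eq:mPupper} is $h_k^{-1/\gamma_i}$, then $h_k\le (C/F_k)^{\gamma_i}$. Enumerate the doubling indices $k_1<k_2<\cdots$. Monotonicity together with $F_{k_j+1}\ge2F_{k_j}$ gives $F_{k_j}\ge2^{j-1}F_{k_1}\ge 2^{j-1}F_0$. Therefore the sum over doubling steps is dominated by two geometric series in $2^{-\gamma_1}$ and $2^{-\gamma_2}$, both finite since $\gamma_1,\gamma_2>0$ (because $0<a,c<1$). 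Adding the two classes proves $\sum_kh_k<\infty$.

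The main obstacle is that neither hypothesis alone controls every step. The quantity $h_kF_k^{\gamma_e}$ need not be bounded when the exponents in \eqref{eq:mPupper} and \eqref{eq:mPincrement} are crossed, so the argument of Lemma~\ref{lem:seq} does not apply directly. The doubling/non-doubling dichotomy is the device that avoids this: on non-doubling steps the increment is comparable to the mass, which makes the finite-difference comparison valid; on doubling steps geometric growth of $F$ makes the crude upper bound summable. The only remaining points to check are that the choice of $e_k$ may vary with $k$, which is harmless because both telescoping sums are bounded, and that an equality $\Delta_k=0$ cannot occur at a non-doubling step, since it would contradict \eqref{eq:mPincrement} with $F_k>0$.
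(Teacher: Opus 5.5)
Your proof is correct, but it takes a different route from the paper's.

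The paper reduces directly to Lemma~\ref{lem:seq} with the smaller exponent $\gamma=\min\{\gamma_1,\gamma_2\}$, writing $\Gamma=\max\{\gamma_1,\gamma_2\}$. From \eqref{eq:mPincrement} it gets $F_{k+1}-F_k\ge c_0h_k\min\{F_k^{1+\gamma},F_k^{1+\Gamma}\}$. Using $F_k\ge F_0$, this becomes $F_{k+1}-F_k\ge c_1h_kF_k^{1+\gamma}$. From \eqref{eq:mPupper} it gets $h_kF_k^{\gamma}\le K$. This holds directly when $h_k\le1$. When $h_k>1$, one first notes $h_k\le(C/F_0)^{\Gamma}$ and then $h_kF_k^\gamma\le C^\gamma h_k^{1-\gamma/\Gamma}$. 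The paper then applies Lemma~\ref{lem:seq} with $g_k=h_k^{-1/\gamma}$.

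So your closing claim that Lemma~\ref{lem:seq} ``does not apply directly'' is too pessimistic. The unboundedness you describe concerns only the larger exponent. The lower bound $F_k\ge F_0$ always lets one downgrade to the smaller exponent, at the price of constants depending on $F_0$.

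Your doubling/non-doubling dichotomy avoids merging the exponents altogether. On non-doubling steps you telescope both negative powers separately, using only \eqref{eq:mPincrement} and constants depending only on $C,a,c$. On doubling steps the geometric growth of $F$ makes the inverted upper bound \eqref{eq:mPupper} summable. Lemma~\ref{lem:seq} instead uses the mass bound to cap the relative increment inside one telescoping estimate; you separate out the large-increment steps and sum them geometrically.

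What each route buys: yours is self-contained and gives the explicit estimate $\sum_kh_k\lesssim F_0^{-\gamma_1}+F_0^{-\gamma_2}$ with constants depending only on $C,a,c$. It also extends verbatim to any finite number of reaction exponents. The paper's argument is shorter because it reuses Lemma~\ref{lem:seq}.
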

\begin{proof}
Put $\gamma=\min(\gamma_1,\gamma_2)$ and $\Gamma=\max(\gamma_1,\gamma_2)$.
At least one term in \eqref{eq:mPincrement} is at least $F_k/(2C)$, and hence
\[
 F_{k+1}-F_k\ge c_0h_k
 \min\{F_k^{1+\gamma},F_k^{1+\Gamma}\}
 \ge c_1h_kF_k^{1+\gamma}.
\]
The last constant may depend on the positive number $F_0$, because $F_k\ge F_0$.
The upper bound also gives $h_kF_k^\gamma\le K$ uniformly: for $h_k\le1$, this follows directly from \eqref{eq:mPupper}; for $h_k>1$, the same bound and $F_k\ge F_0$ first give $h_k\le(C/F_0)^\Gamma$, after enlarging $C$ if necessary. More explicitly, in the latter case
\[
 h_kF_k^\gamma\le C^\gamma h_k^{1-\gamma/\Gamma}
 \le C^\gamma\max\{1,(C/F_0)^{\Gamma-\gamma}\}.
\]
Apply Lemma~\ref{lem:seq} with $g_k=h_k^{-1/\gamma}$ and the corresponding enlarged constants. It yields $\sum_kh_k<\infty$.
\end{proof}

\subsection{Two tests on one cylinder}
Fix a positive starting time $a_0$, a spatial radius $R$, and a time length $T$.
Choose a product cutoff $0\le\phi\le1$ equal to one on
$B_o(R)\times[a_0,a_0+T]$, supported in
$\overline{B_o(2R)}\times[a_0,a_0+2T]$, nonincreasing in time, with
\[
 |\nabla\phi|\le C/R,\qquad |\phi_t|\le C/T.
\]
The cutoff is understood on the relative half-cylinder $t\ge a_0$; no extension before $a_0$ is required. All time integrals below start at $a_0$. The initial boundary contributions have a favorable sign and are retained or discarded accordingly.
Choose a fixed number
\[
 0<\theta<\min\{1,p-1,q/(p-1)-1\},
\]
and take a fixed cutoff power satisfying
\[
 s>\max\left\{
 \frac{pq}{d+\theta},\frac{pq}{d-(p-1)\theta},
 \frac{q}{q-1+\theta},\frac{q}{q-1}\right\}.
\]
All these parameters can be fixed in terms of $p,q$ alone.
We first justify the nonlinear test without assuming a time derivative of $u$. Write $\mathcal A=|\nabla u|^{p-2}\nabla u$ and take forward Steklov averages
\[
 u_h(t)=h^{-1}\int_t^{t+h}u(s)\dd s,
 \quad \mathcal A_h(t)=h^{-1}\int_t^{t+h}\mathcal A(s)\dd s,
 \quad f_h(t)=h^{-1}\int_t^{t+h}u(s)^q\dd s.
\]
They satisfy $\partial_tu_h-\operatorname{div}\mathcal A_h\ge f_h\ge0$ on a slightly smaller cylinder, with $\partial_tu_h=(u(t+h)-u(t))/h\in L^1_{\mathrm{loc}}$.
The averaged inequality permits nonnegative bounded spatial Sobolev tests: bounded approximation with almost-everywhere convergence passes the terms involving $\partial_tu_h$ and $f_h$ by dominated convergence, while strong convergence of spatial gradients passes the flux term by H\"older's inequality. This approximation may be performed in spacetime with a uniform bound and a fixed compact support; after averaging, no time derivative of the test appears in this extension. Time-interval indicators are allowed by the same integrable approximation.

For fixed $\varepsilon>0$, use $(u_h+\varepsilon)^{-\theta}\phi^s$ as test, integrate from $a_0$, and discard the nonnegative averaged reaction term. The temporal primitive is
\[
 P_\varepsilon(v)=
 \frac{(v+\varepsilon)^{1-\theta}-\varepsilon^{1-\theta}}{1-\theta}.
\]
Its derivative is bounded on $[0,\infty)$, so the temporal $W^{1,1}$ chain rule applies to $u_h$. The initial contribution $-\int P_\varepsilon(u_h(a_0))\phi(a_0)^s$ moves to the favorable side and may be discarded. The gradient pairing $\langle\mathcal A_h,\nabla u_h\rangle$ is not assumed nonnegative when $h>0$.

First let $h\downarrow0$ while keeping $\varepsilon$ fixed. Locally, $\nabla u_h\to\nabla u$ strongly in $L^p$ and $\mathcal A_h\to\mathcal A$ strongly in $L^{p/(p-1)}$, so
$\langle\mathcal A_h,\nabla u_h\rangle\to|\nabla u|^p$ strongly in $L^1$.
The regularized weights are uniformly bounded and converge almost everywhere along a subsequence; their weighted products therefore converge in $L^1$. Also $P_\varepsilon(u_h)\to P_\varepsilon(u)$ in $L^1$ because this primitive is Lipschitz.
Only after this passage apply Young's inequality, and finally let $\varepsilon\downarrow0$. This gives
\begin{equation}\label{eq:mPenergy}
 A:=\iint u^{-1-\theta}|\nabla u|^p\phi^s
 \le C(B+H),
\end{equation}
where
\[
 B=\iint u^{p-1-\theta}|\nabla\phi|^p\phi^{s-p},
 \qquad
 H=\iint u^{1-\theta}|\phi_t|\phi^{s-1}.
\]
Here and below zero is assigned to the energy integrand on $\{u=0\}$; the spatial Sobolev gradient is zero almost everywhere on this set.
In the last passage use Fatou's lemma for $A$ and dominated convergence for $B,H$; the powers $p-1-\theta$ and $1-\theta$ are positive and less than $q$.

The pure cutoff test in the averaged inequality gives a favorable initial term $\int u_h(a_0)\phi(a_0)^s$. Discard it and let $h\downarrow0$, using $f_h\to u^q$ and $u_h\to u$ in $L^1$ and $\mathcal A_h\to\mathcal A$ in $L^{p/(p-1)}$. Thus, with
$I=\iint u^q\phi^s$,
\begin{equation}\label{eq:mPpure}
 I\le C A^{(p-1)/p}D^{1/p}+CZ,
 \quad
 D=\iint u^{(1+\theta)(p-1)}|\nabla\phi|^p\phi^{s-p},
 \quad
 Z=\iint u|\phi_t|\phi^{s-1}.
\end{equation}
Again the initial contribution is nonnegative on the left.
Let $W=2TV_o(2R)$, and let $J$ denote either the weighted mass $I$ or the unweighted $u^q$ mass on a set containing the supports of both cutoff derivatives.
H\"older's inequality bounds $B,H,D,Z$ by
\begin{align*}
 B&\le CR^{-p}W^{1-(p-1-\theta)/q}J^{(p-1-\theta)/q},\\
 H&\le CT^{-1}W^{1-(1-\theta)/q}J^{(1-\theta)/q},\\
 D&\le CR^{-p}W^{1-(1+\theta)(p-1)/q}J^{(1+\theta)(p-1)/q},\\
 Z&\le CT^{-1}W^{1-1/q}J^{1/q}.
\end{align*}
For the choice $J=I$, retain the factors $\phi^s$ in H\"older's inequality. For example, the remaining power of $\phi$ in the $B$ estimate is nonnegative once
$s\ge p/[1-(p-1-\theta)/q]$. The analogous requirements for $H,D,Z$ are all finite by the choice of $\theta$.
For the unweighted transition mass, simply discard the remaining nonnegative cutoff powers. Thus the two choices of $J$ are justified separately; an unweighted transition mass is not assumed bounded by $I$.

Set
\[
 \alpha=\frac{p-1}{q},\quad \chi=\frac1q,
 \qquad
 A_R=R^{-p}W^{1-\alpha},\quad A_T=T^{-1}W^{1-\chi}.
\]
Substitution in \eqref{eq:mPpure} gives the pure terms
$A_RJ^\alpha$ and $A_TJ^\chi$, and the mixed term
\[
 (A_RJ^\alpha)^{1/p}(A_TJ^\chi)^{(p-1)/p}.
\]
The last identity follows from
$2(p-1)/(pq)=\alpha/p+(p-1)\chi/p$.
Weighted arithmetic--geometric mean therefore gives
\begin{equation}\label{eq:mPlocal}
 I\le C(A_RJ^\alpha+A_TJ^\chi).
\end{equation}

Choose the volume-dependent intrinsic time and its associated weight as
\begin{equation}\label{eq:mPtime}
 T(R)=R^pV_o(2R)^{p-2},
 \qquad h(R)=\frac{R^p}{V_o(2R)^d}.
\end{equation}
With $W=2T(R)V_o(2R)$, direct cancellation of the powers gives
$A_R\le Ch(R)^{-\alpha}$ and $A_T\le Ch(R)^{-\chi}$.
Consequently \eqref{eq:mPlocal} becomes
\begin{equation}\label{eq:mPmaster}
 I\le C\left[(J/h(R))^\alpha+(J/h(R))^\chi\right].
\end{equation}
Choosing $J=I$ first yields
\begin{equation}\label{eq:mPcutoffupper}
 I\le C\max\{h(R)^{-(p-1)/d},h(R)^{-1/(q-1)}\}.
\end{equation}

\subsection{Nesting intrinsic time scales without volume doubling}
For $R_k=2^kR_0$ write $T_k=T(R_k)$ and $h_k=h(R_k)$.
In Lemma~\ref{lem:int}, take $A=p$, $L=2$ and $x_k=2R_k$. Its comparison terms are $b_k=2^ph_k$, so \eqref{eq:mPcrit} is equivalent to $\sum_kh_k=\infty$. No comparison between $V_o(R_k)$ and $V_o(2R_k)$ is made.
If $p\ge2$, volume monotonicity gives $T_{k+1}\ge2^pT_k$, so the associated cylinders are already nested with room for the cutoff.
For $1<p<2$ the time lengths need not be monotone. This difficulty can be resolved by selecting scales rather than assuming a geometric regularity condition.

\begin{lemma}[Selection of increasing intrinsic times]\label{lem:mPrecords}
Suppose $1<p<2$ and $q>1$. If $\sum_kh_k=\infty$, there are indices
$j_0<j_1<\cdots$ such that $T_{j_{\ell+1}}\ge2T_{j_\ell}$ and
$\sum_\ell h_{j_\ell}=\infty$.
\end{lemma}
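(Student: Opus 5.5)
The plan is a greedy record selection. Take $j_0=0$. Given $j_\ell$, let $j_{\ell+1}$ be the first index $j>j_\ell$ with $T_j\ge 2T_{j_\ell}$, provided such an index exists. By construction $T_{j_{\ell+1}}\ge 2T_{j_\ell}$. It remains to show two things: the selection never stops, and the selected $h_{j_\ell}$ carry a fixed fraction of the divergent series. Both follow from one block estimate. For an index $k$ in a block $j_\ell\le k<j_{\ell+1}$, or $k\ge j_\ell$ if the selection stops at $j_\ell$, we bound $h_k$ by $h_{j_\ell}$ times a geometrically decaying factor in $k-j_\ell$.

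\textbf{Block estimate.} Write $j=j_\ell$ and $V_k=V_o(2R_k)$. For $k$ in the block we have $T_k<2T_j$, that is,
\[
 R_k^pV_k^{-(2-p)}<2R_j^pV_j^{-(2-p)}.
\]
Since $2-p>0$, this rearranges to
\[
 V_k^{-1}<2^{1/(2-p)}(R_j/R_k)^{p/(2-p)}V_j^{-1}.
\]
Raise this to the power $d>0$ and insert it into $h_k=R_k^pV_k^{-d}$:
\[
 h_k<2^{d/(2-p)}\,h_j\,(R_k/R_j)^{p(1-d/(2-p))}
 =2^{d/(2-p)}\,h_j\,2^{-(k-j)\sigma},
 \qquad \sigma=\frac{p(d-2+p)}{2-p}=\frac{p(q-1)}{2-p}.
\]
The hypothesis $q>1$ is used exactly here: it gives $\sigma>0$. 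No monotonicity or doubling of $V_o$ enters beyond the defining formulas.

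\textbf{Conclusion.} Summing the geometric factor gives
\[
 \sum_{k\text{ in block}}h_k\le C\,h_{j_\ell},
 \qquad C=\frac{2^{d/(2-p)}}{1-2^{-\sigma}}.
\]
If the selection stopped at some $j_\ell$, the whole tail $\sum_{k\ge j_\ell}h_k$ would be bounded by $Ch_{j_\ell}<\infty$, contradicting $\sum_kh_k=\infty$. Hence infinitely many records exist. The blocks partition $\{k\ge j_0\}$, so
\[
 \sum_{k\ge j_0}h_k\le C\sum_\ell h_{j_\ell},
\]
and the selected series diverges.

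The only potential obstacle is the sign of the exponent $\sigma$, and it is settled by $q>1$. Everything else is bookkeeping.
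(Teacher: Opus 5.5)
Your proof is correct and follows essentially the same route as the paper: greedy selection of the first index with $T_k\ge 2T_{j_\ell}$, together with the block bound $h_k\le 2^{\rho}2^{-\lambda(k-j_\ell)}h_{j_\ell}$, where $\rho=d/(2-p)$ and $\lambda=p(\rho-1)=p(q-1)/(2-p)$ coincides with your $\sigma$. The only differences are presentational. The paper packages your rearrangement as the identity $h_k=R_k^{-\lambda}T_k^{\rho}$. It also proves that $T_k$ is unbounded before selecting, whereas your "the selection cannot stop" argument obtains the same conclusion from the block estimate.
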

\begin{proof}
Put $\rho=d/(2-p)>1$ and $\lambda=p(\rho-1)>0$.
The definitions imply the exact identity
\[
 h_k=R_k^{-\lambda}T_k^\rho.
\]
If $T_k$ were bounded, the $h_k$ series would converge geometrically. Thus $T_k$ is unbounded.
Starting from any $j_0$, choose $j_{\ell+1}$ to be the first index larger than $j_\ell$ at which $T_k\ge2T_{j_\ell}$.
For $j_\ell\le k<j_{\ell+1}$,
\[
 h_k\le2^\rho\,2^{-\lambda(k-j_\ell)}h_{j_\ell}.
\]
Summation over each block gives
$\sum_kh_k\le C\sum_\ell h_{j_\ell}$, proving the claim.
\end{proof}

\begin{proof}[Proof of Theorem~\ref{thm:mP}]
Suppose that $u$ is nonzero. Choose $a_0>0$ and a sufficiently large initial cylinder so its $u^q$ mass is positive. For $p\ge2$, use every dyadic index; for $1<p<2$, use Lemma~\ref{lem:mPrecords} and start sufficiently far out. Denote the selected radii, times, and weights by $\widehat R_\ell,\widehat T_\ell,\widehat h_\ell$.
Then $\widehat R_{\ell+1}\ge2\widehat R_\ell$,
$\widehat T_{\ell+1}\ge2\widehat T_\ell$, and
$\sum_\ell\widehat h_\ell=\infty$.
Define
\[
 Q_\ell=B_o(\widehat R_\ell)\times(a_0,a_0+\widehat T_\ell),
 \qquad F_\ell=\iint_{Q_\ell}u^q>0.
\]
The cutoff at scale $\ell$ is one on $Q_\ell$, and all its derivatives are supported in $Q_{\ell+1}\setminus Q_\ell$.
Equation~\eqref{eq:mPcutoffupper} gives the upper mass estimate in Lemma~\ref{lem:mPmass}, with
$a=\alpha$, $c=\chi$, $\gamma_1=d/(p-1)$, and $\gamma_2=q-1$.
Equation~\eqref{eq:mPmaster}, now using the unweighted transition mass, gives its increment estimate because
$I\ge F_\ell$ and $J\le F_{\ell+1}-F_\ell$.
That lemma forces $\sum_\ell\widehat h_\ell<\infty$, a contradiction.
The local $L^1$ initial trace of an identically zero spacetime solution is zero, proving the Cauchy assertion.
\end{proof}

\subsection{Thresholds, logarithms, and limitations}
If, for large $R$,
\[
 V_o(R)\le CR^{p/d}(\log R)^b,
 \qquad b\le\frac1d,
\]
then \eqref{eq:mPcrit} holds. More generally the finite iterated-logarithmic bounds
\[
 V_o(R)\le CR^{p/d}
 \prod_{j=1}^{N}(\log_jR)^{1/d}
\]
also imply nonexistence, since the reciprocal-volume integrand dominates
$[R\prod_{j=1}^N\log_jR]^{-1}$.
On $\mathbb R^n$ the criterion is exactly
\[
 q\le p-1+\frac pn,
 \qquad q>\max\{1,p-1\}.
\]
This recovers the nonexistence side of the Fujita threshold within the stated regularity class, including its endpoint whenever that endpoint is greater than one. If $p\le2n/(n+1)$, this threshold is at most one, so the criterion supplies no superlinear Euclidean nonexistence interval.

For $0<q\le1$ there is a direct obstruction to an analogous global Liouville conclusion: a spatially constant solution satisfies $y'=y^q$. For $y(0)>0$ it is global, with
$y(t)=y(0)e^t$ if $q=1$ and
$y(t)=[y(0)^{1-q}+(1-q)t]^{1/(1-q)}$ if $0<q<1$.
For $1<q\le p-1$, the stationary supersolution furnished by the rapidly expanding elliptic model in Section~\ref{sec:pelliptic} shows that a vanishing conclusion in that range needs an additional geometric restriction. This example addresses supersolutions of the inequality only.
Finally, convergence of \eqref{eq:mPcrit} is not an existence theorem on an arbitrary manifold.


\begin{thebibliography}{99}
\footnotesize
\setlength{\itemsep}{1pt}
\bibitem{MP01}
E.~Mitidieri and S.~I.~Pohozaev,
A priori estimates and the absence of solutions of nonlinear partial differential equations and inequalities,
\emph{Proc. Steklov Inst. Math.} \textbf{234} (2001), no.~3, 1--362.

\bibitem{GK10}
A.~Grigor'yan and V.~A.~Kondratiev,
On the existence of positive solutions of semi-linear elliptic inequalities on Riemannian manifolds,
in \emph{Around the Research of Vladimir Maz'ya II: Partial Differential Equations},
A.~Laptev (ed.), International Mathematical Series, vol.~12,
Springer, New York, 2010, pp.~203--218.

\bibitem{GS14}
A.~Grigor'yan and Y.~Sun,
On nonnegative solutions of the inequality $\Delta u+u^\sigma\le0$ on Riemannian manifolds,
\emph{Comm. Pure Appl. Math.} \textbf{67} (2014), 1336--1352.

\bibitem{Z15}
H.-C.~Zhang,
A note on Liouville type theorem of elliptic inequality $\Delta u+u^\sigma\le0$ on Riemannian manifolds,
\emph{Potential Anal.} \textbf{43} (2015), 269--276.

\bibitem{Xu19}
F.~Xu,
On the existence of nonnegative solutions to semilinear differential inequality on Riemannian manifolds,
\emph{Nonlinear Anal.} \textbf{183} (2019), 29--41.

\bibitem{GS19}
A.~Grigor'yan and Y.~Sun,
On positive solutions of semi-linear elliptic inequalities on Riemannian manifolds,
\emph{Calc. Var. Partial Differential Equations} \textbf{58} (2019), article~207.

\bibitem{GSV20}
A.~Grigor'yan, Y.~Sun and I.~Verbitsky,
Superlinear elliptic inequalities on manifolds,
\emph{J. Funct. Anal.} \textbf{278} (2020), article~108444.

\bibitem{Fujita}
H.~Fujita,
On the blowing up of solutions of the Cauchy problem for $u_t=\Delta u+u^{1+\alpha}$,
\emph{J. Fac. Sci. Univ. Tokyo Sect. I} \textbf{13} (1966), 109--124.

\bibitem{Hayakawa1973}
K.~Hayakawa,
On nonexistence of global solutions of some semilinear parabolic differential equations,
\emph{Proc. Japan Acad.} \textbf{49} (1973), 503--505.

\bibitem{Sugitani1975}
S.~Sugitani,
On nonexistence of global solutions for some nonlinear integral equations,
\emph{Osaka J. Math.} \textbf{12} (1975), 45--51.

\bibitem{KST1977}
K.~Kobayashi, T.~Sirao and H.~Tanaka,
On the growing up problem for semilinear heat equations,
\emph{J. Math. Soc. Japan} \textbf{29} (1977), 407--424.

\bibitem{MMP2017}
P.~Mastrolia, D.~D.~Monticelli and F.~Punzo,
Nonexistence of solutions to parabolic differential inequalities with a potential on Riemannian manifolds,
\emph{Math. Ann.} \textbf{367} (2017), 929--963.

\bibitem{Xu2018}
F.~Xu,
On existence and nonexistence of nonnegative solutions to seminlinear differential equation on Riemannian manifolds,
preprint, arXiv:1811.12731v1 (2018).

\bibitem{SX2022}
Y.~Sun and F.~Xu,
On existence and nonexistence of nonnegative solutions to heat equation on Riemannian manifolds,
in \emph{Geometric Potential Analysis}, M.~Milman, J.~Xiao and B.~Zegarlinski (eds.),
Advances in Analysis and Geometry, vol.~6,
De Gruyter, Berlin/Boston, 2022, pp.~207--218.

\bibitem{RY2022}
M.~Ruzhansky and N.~Yessirkegenov,
Existence and non-existence of global solutions for semilinear heat equations and inequalities on sub-Riemannian manifolds, and Fujita exponent on unimodular Lie groups,
\emph{J. Differential Equations} \textbf{308} (2022), 455--473.

\bibitem{CGM2025}
D.-E.~von Criegern, G.~Grillo and D.~D.~Monticelli,
Nonexistence of solutions to classes of parabolic inequalities in the Riemannian setting,
\emph{J. London Math. Soc.} \textbf{112} (2025), e70394.

\bibitem{GHHS2026}
Q.~Gu, L.~Hao, X.~Huang and Y.~Sun,
Flow Decomposition and Sharp Integral Fujita Criteria on Weighted Graphs,
preprint, arXiv:2608.06715 (2026).
\bibitem{MMP15}
P.~Mastrolia, D.~D.~Monticelli and F.~Punzo,
Nonexistence results for elliptic differential inequalities with a potential on Riemannian manifolds,
\emph{Calc. Var. Partial Differential Equations} \textbf{54} (2015), 1345--1372.

\bibitem{Sun16}
Y.~Sun,
Uniqueness result on nonnegative solutions of a large class of differential inequalities on Riemannian manifolds,
\emph{Pacific J. Math.} \textbf{280} (2016), no.~1, 241--254.

\bibitem{GMP23}
G.~Grillo, G.~Meglioli and F.~Punzo,
Global existence for reaction-diffusion evolution equations driven by the $p$-Laplacian on manifolds,
\emph{Math. Eng.} \textbf{5} (2023), no.~3, 1--38.

\bibitem{GHHS26p}
Q.~Gu, L.~Hao, X.~Huang and Y.~Sun,
A Volume-Growth Criterion for the $p$-Laplace Inequality on Weighted Graphs,
preprint, arXiv:2605.10446v2 (2026).

\end{thebibliography}
\end{document}